\newtheorem{thm}{Theorem}
\newtheorem{cor}[thm]{Corollary}
\newtheorem{lem}[thm]{Lemma}
\newtheorem{prop}[thm]{Proposition}
\newtheorem*{thm*}{Theorem}
\theoremstyle{definition}
\theoremstyle{remark}
\newtheorem*{remarks}{Remarks}
\newcommand{\overbar}[1]{\mkern 1.5mu\overline{\mkern-1.5mu#1\mkern-1.5mu}\mkern 1.5mu}
\author{Sel\c{c}uk Kayacan\thanks{This work is supported by the T{\"U}B\.{I}TAK 2214/A Study Abroad Fellowship Program: 1059B141401085.}}
\title{$K_{3,3}$-free Intersection Graphs of Finite Groups\thanks{I would like to thank to the anonymous referee for the meticulous reading and the shorter arguments he/she provides which greatly enhances the exposition. This work is completed while I was a visitor in Graz University of Technology. By this occasion I would like to thank to Math C Department of TUGraz for their hospitality.}}
\date{}
\begin{document}

\maketitle

\small
\begin{center}
Department of Mathematics, Istanbul Technical University,
34469 Maslak, Istanbul, Turkey. \\
{\it e-mail:} \href{mailto:skayacan@itu.edu.tr}{skayacan@itu.edu.tr}
\end{center}

\begin{abstract}
  The intersection graph of a group $G$ is an undirected graph without
  loops and multiple edges defined as follows: the vertex set is the
  set of all proper non-trivial subgroups of $G$, and there is an edge
  between two distinct vertices $H$ and $K$ if and only if
  $H\cap K \neq 1$ where $1$ denotes the trivial subgroup of $G$. In
  this paper we classify all finite groups whose intersection graphs
  are $K_{3,3}$-free. 

\smallskip
\noindent 2010 {\it Mathematics Subject Classification.} Primary: 20D99; Secondary: 20D15, 20D25, 05C25.

\smallskip
\noindent Keywords: Finite groups; subgroup; intersection graph
\end{abstract}

\subsection*{Introduction}
\label{sec:introduction}

Let $\overbar{\Gamma}$ and $\Gamma$ be two graphs. We say that $\overbar{\Gamma}$ is $\Gamma$-free if there is no subgraph of $\overbar{\Gamma}$ which is isomorphic to $\Gamma$, i.e. $\overbar{\Gamma}$ does not contain $\Gamma$ as a subgraph. Let $G$ be a group. We denote by $\Gamma(G)$ the intersection graph (of subgroups) of $G$. For simplicity we say that $G$ is $\Gamma$-free whenever its intersection graph is $\Gamma$-free.

In a recent work \cite{RajkumarDevi}, Rajkumar and Devi classified finite groups whose intersection graphs does not contain one of $K_5$, $K_4$, $C_5$, $C_4$, $P_4$, $P_3$, $P_2$, $K_{1,3}$, $K_{2,3}$ or $K_{1,4}$ as a subgraph. Here we present the classification of finite $K_{3,3}$-free groups. Our main result is 

\begin{thm*} A finite group is $K_{3,3}$-free if and only if it is isomorphic to the one of the following groups:
\begin{enumerate}
\itemsep-1em 
\item $\mathbb{Z}_{pqr}$, $\mathbb{Z}_{p^2q}$, $\mathbb{Z}_{pq}$, $\mathbb{Z}_{p^{i}}$, where $p,q,r$ are distinct primes and $0\leq i\leq 6$.\\
\item $\mathbb{Z}_{4}\times \mathbb{Z}_{2}$, $\mathbb{Z}_{p}\times \mathbb{Z}_{p}$, $\mathbb{Z}_{2}\times \mathbb{Z}_{2}\times \mathbb{Z}_{p}{\ }(p\neq 2)$, where $p$ is a prime.\\
\item The dihedral group $D_8$ of order $8$, the quaternion group $Q_8$ of order $8$.\\
\item The semidirect products $\mathbb{Z}_q \rtimes \mathbb{Z}_{p^2}$ with $p^2\bigm{|}q-1$, $(\mathbb{Z}_p \times \mathbb{Z}_p) \rtimes \mathbb{Z}_q$ with $q\bigm{|}p+1$, where $p,q$ are distinct primes.\\
\item The semidirect product $(\mathbb{Z}_p \times \mathbb{Z}_p) \rtimes \mathbb{Z}_{q^2}$ with $q^2\bigm{|}p+1$, where $p,q$ are distinct primes.\\
\item The semidirect product $\mathbb{Z}_r \rtimes \mathbb{Z}_{pq}$ with $pq\bigm{|}r-1$, where $p,q,r$ are distinct primes.\\
\item The semidirect product $\mathbb{Z}_p\rtimes \mathbb{Z}_q$ with $q\bigm{|}p-1$, where $p>q$ are distinct primes.\\
\item $\mathbb{Z}_{p^3}\times \mathbb{Z}_q$, $\mathbb{Z}_9\times\mathbb{Z}_3$, $(\mathbb{Z}_3\times\mathbb{Z}_3)\rtimes\mathbb{Z}_3$, $\mathbb{Z}_9\rtimes\mathbb{Z}_3$, $\mathbb{Z}_3\rtimes\mathbb{Z}_4$, and $D_{18}$, where $p,q$ are distinct primes.\\
\item The semidirect product $\mathbb{Z}_q \rtimes \mathbb{Z}_{p^3}$ with $p^3\bigm{|}q-1$, where $p,q$ are distinct primes.
\end{enumerate}
\end{thm*}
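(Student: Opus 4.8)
The plan is to establish both implications, using throughout that $\Gamma(H)$ is an induced subgraph of $\Gamma(G)$ whenever $H\le G$, so that being $K_{3,3}$-free is inherited by subgroups. The ``if'' direction is a structured verification: for each of the nine families one writes down the poset of proper non-trivial subgroups. For a cyclic group $\mathbb{Z}_n$ this is the divisor lattice, and $\Gamma(\mathbb{Z}_n)$ is the graph on the divisors $d$ with $1<d<n$ in which $d\sim d'$ iff $\gcd(d,d')>1$; a direct inspection shows that for $n\in\{pqr,\ p^2q,\ pq,\ p^i\ (i\le 6),\ p^3q\}$ at most five vertices have degree $\ge 3$, and a graph with fewer than six vertices of degree $\ge 3$ cannot contain $K_{3,3}$ because every vertex of $K_{3,3}$ has degree $3$. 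The non-cyclic entries (the groups in items 2, 3 and 8, and the semidirect products) have small subgroup lattices, essentially determined by Sylow's theorems together with a count of the complements of the normal Sylow subgroup; in each case one again bounds the number of vertices of degree $\ge 3$, or simply exhibits the (small) graph $\Gamma(G)$.

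For the ``only if'' direction the engine is a family of ``many mutually intersecting subgroups'' lemmas, the basic one being: if some non-trivial subgroup $C\le G$ lies in at least five \emph{other} proper subgroups of $G$, then those six subgroups pairwise intersect non-trivially, so $\Gamma(G)\supseteq K_6\supseteq K_{3,3}$; this applies in particular whenever $G$ has a proper non-trivial normal subgroup $N$ for which $G/N$ has at least five proper non-trivial subgroups. From such statements one extracts, roughly in this order: (i) $G$ has no cyclic subgroup of order $p^7$, and a cyclic subgroup of order $p^6$ forces $G$ to be cyclic; (ii) a non-cyclic abelian group other than $\mathbb{Z}_p\times\mathbb{Z}_p$, $\mathbb{Z}_4\times\mathbb{Z}_2$, $\mathbb{Z}_2\times\mathbb{Z}_2\times\mathbb{Z}_p$, $\mathbb{Z}_9\times\mathbb{Z}_3$ contains $K_{3,3}$ (already $\mathbb{Z}_p\times\mathbb{Z}_{p^2}$ with $p\ge 5$ contains $K_6$, since its minimal subgroup in the $\mathbb{Z}_{p^2}$-direction lies in all $p+1$ subgroups of order $p^2$); (iii) $|G|$ has at most three distinct prime divisors --- if $\omega(|G|)\ge 4$, pass to $G/N$ for a minimal normal $N$; then $\omega(|G/N|)\ge 3$, and any group whose order has three prime divisors has at least five proper non-trivial subgroups; and (iv) $G$ is solvable --- a non-solvable $G$ contains a minimal non-solvable subgroup $H$, and either $\Phi(H)\ne 1$, whence $H/\Phi(H)$ is a non-abelian simple group with more than five proper non-trivial subgroups so $\Gamma(H)\supseteq K_6$, or $H$ is one of the minimal simple groups, each of which visibly contains $K_{3,3}$.

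It then remains to classify solvable $G$ according to $\omega(|G|)\in\{1,2,3\}$. For $p$-groups one uses the Frattini quotient: if $\dim_{\mathbb{F}_p}G/\Phi(G)\ge 3$, or it equals $2$ while $|G|\ge p^4$, then $G$ has more than five maximal subgroups, or more than five subgroups over a fixed minimal subgroup, and $K_{3,3}$ appears; the survivors are $\mathbb{Z}_{p^i}\ (i\le 6)$, $\mathbb{Z}_p\times\mathbb{Z}_p$, $\mathbb{Z}_4\times\mathbb{Z}_2$, $D_8$, $Q_8$, $\mathbb{Z}_9\times\mathbb{Z}_3$, and the two non-abelian groups of order $27$. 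For $\omega(|G|)=2$, say $|G|=p^aq^b$, one first shows that some Sylow subgroup is normal (otherwise the Sylow subgroups and their pairwise products supply $K_{3,3}$), writes $G=P\rtimes Q$, bounds $|P|$ and $|Q|$ via the lemmas above, determines the action of $Q$ on $P$, and then counts the subgroups of $P\rtimes Q$; this count forces the shapes $\mathbb{Z}_q\rtimes\mathbb{Z}_{p^k}\ (k\le 3)$, $(\mathbb{Z}_p\times\mathbb{Z}_p)\rtimes\mathbb{Z}_{q^k}\ (k\le 2)$, $\mathbb{Z}_p\rtimes\mathbb{Z}_q$, $\mathbb{Z}_3\rtimes\mathbb{Z}_4$, $D_{18}$, and the divisibility conditions $p^k\mid q-1$ and $q^k\mid p+1$ are precisely what keep the relevant conjugacy classes of complements (equivalently, the $Q$-invariant subgroups of $P$) small. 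Finally $\omega(|G|)=3$ is the most rigid case: the counting forces $|G|$ to be essentially squarefree, leaving $\mathbb{Z}_{pqr}$, $\mathbb{Z}_r\rtimes\mathbb{Z}_{pq}$ with $pq\mid r-1$, $\mathbb{Z}_{p^2q}$ and $\mathbb{Z}_{p^3}\times\mathbb{Z}_q$.

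The step I expect to be hardest is the non-abelian $\omega(|G|)=2$ analysis: the subgroup count of $P\rtimes Q$ has to be carried out uniformly enough to see at once which semidirect products survive and why increasing any single exponent --- passing to $\mathbb{Z}_q\rtimes\mathbb{Z}_{p^4}$, to $(\mathbb{Z}_p\times\mathbb{Z}_p)\rtimes\mathbb{Z}_{q^3}$, or to a non-cyclic complement --- immediately produces six pairwise-intersecting, or otherwise $K_{3,3}$-arranged, subgroups. Keeping this bookkeeping from splintering into a long list of ad hoc subcases is the real work.
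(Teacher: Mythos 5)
Your overall architecture mirrors the paper's: hereditary $K_{3,3}$-freeness, the observation that a normal subgroup $N$ with $G/N$ having many subgroups yields a clique over $N$, a reduction of $\omega(|G|)$, and a case split by order for solvable groups. The ``if'' direction via counting vertices of degree $\geq 3$ is fine. But there is a genuine gap in your step (iv). After reducing to a minimal simple group $H$ (which is legitimate: $H/\Phi(H)$ is minimal simple, and $\Phi(H)\neq 1$ gives a $K_6$), you dispose of the minimal simple groups by saying each ``visibly contains $K_{3,3}$.'' This is not visible. Thompson's list contains the infinite family $PSL_2(p)$ with $p\equiv\pm 3\pmod 8$, whose Sylow $2$-subgroups are Klein four groups --- themselves $K_{3,3}$-free --- so no Sylow-subgroup or Frattini argument applies, and one cannot inspect infinitely many groups. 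The paper has to work for this: it first constrains the Sylow $2$-subgroup $S$ to the short list of $K_{3,3}$-free $2$-groups, rules out $\mathbb{Z}_4\times\mathbb{Z}_2$, $D_8$, $Q_8$ by showing they would be maximal and then invoking Deskins' solvability criterion, rules out cyclic $S$ by the normal complement theorem, and for $S\cong\mathbb{Z}_2\times\mathbb{Z}_2$ invokes Kondrat'ev's determination of $N_G(S)$ plus the dihedral subgroups $D_{q\pm 1}$ of $PSL_2(q)$ to assemble an explicit $K_{3,3}$ from $H$, $N_G(S)\cong A_4$, $S$ and the three subgroups of $S$. Some such argument (or an explicit citation doing equivalent work) is indispensable; without it the non-solvable case is unproved.

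A secondary, lesser issue: the $\omega(|G|)=2,3$ classification --- which you yourself flag as the real work --- is only a plan, not a proof. The paper's device here is exactly your ``basic lemma'' sharpened through the minimal normal subgroup: rank-$1$ $N$ forces $G/N$ to have at most $6$ subgroups, rank-$2$ $N$ at most $3$, which pins $|G|$ to the list $p^5q,p^4q,p^2qr,p^3q,p^2q,p^2q^2,pqr,pq$ before any action-counting begins; the divisibility conditions $p^k\mid q-1$ versus $q^k\mid p+1$ then come out of whether Sylow subgroups can intersect nontrivially and whether subgroups of order $pq$ exist. Your sketch is compatible with this, but as written it defers precisely the subcase analysis that constitutes the theorem.
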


In \cite{KayacanYaraneri2015a}, Kayacan and Yaraneri classified all finite groups whose intersection graphs are planar (planar groups in short). By Kuratowski's theorem a graph is planar if and only if it does not contain a subgraph that is a subdivision of $K_5$ or $K_{3,3}$. Therefore the above list contains all finite groups with planar intersection graphs. Actually, the first seven items lists the planar groups (except $\mathbb{Z}_{p^6}$) and the last two items presents the $K_{3,3}$-free groups containing a $K_5$ as a subgraph in their intersection graphs. Our methods are elementary. Occasionally, we used some improved versions of the arguments that are presented in \cite{KayacanYaraneri2015a}. Notation is standard and should be clear to the reader. Below we present some of the results we used in the text.

\begin{remarks}
\mbox{}
  \begin{enumerate} 
  \item \label{rem1} (see \cite[p.~30]{Rotman1995}) (Product Formula)
    $|XY||X\cap Y|=|X||Y|$ for any two subgroups $X$ and $Y$ of a finite group.
  \item \label{rem2} (see \cite[p.~30]{Berkovich2008}) (Sylow Theorems)
    \begin{enumerate}
    \item Let $G$ be a group of order $p^n$ and $k < n$. Then the number of subgroups of order $p^k$ in $G$ is $\equiv 1 \pmod{p}$.
    \item Let $G$ be a group of order $p^ns$, $p\nmid s$, $k\leq n$. Then the number of subgroups of order $p^k$ in $G$ is $\equiv 1 \pmod{p}$.
    \end{enumerate}
  \item \label{rem3} (see \cite[p.~81]{Rotman1995}) Let $P$ be a Sylow $p$-subgroup of a finite group $G.$ Then, $\mathrm{N}_G(H)=H$ for any subgroup $H$ with $\mathrm{N}_G(P)\leq H\leq G.$
  \item \label{rem4} (see \cite[p.~24]{Gorenstein1980}) In a finite solvable group $G$, the factors of every chief series are elementary abelian of prime power order. In particular, every minimal normal subgroup of $G$ is elementary abelian.
  \item \label{rem5} (see \cite[p.~231]{Gorenstein1980}) If $G$ is a finite solvable group, then any $\pi$-subgroup is contained in a Hall $\pi$-subgroup. Moreover, any two Hall $\pi$-subgroup are conjugate.
  \item \label{rem6} (see \cite[p.~197]{Rotman1995}) Let $G$ be a finite group and $p$ be the smallest prime divisor of $|G|$. If a Sylow $p$-subgroup $P$ of $G$ is cyclic, then there is a normal subgroup $N$ of $G$ such that $P\cap N=1$ and $G=PN.$ 
  \end{enumerate}
\end{remarks}

\subsection*{Nilpotent groups}
\label{sec:nilpotent-groups}

Obviously, if $G$ is $\Gamma$-free, then any subgroup of $G$ is also $\Gamma$-free. With this simple remark and by using the fundamental theorem of finite abelian groups, it is not difficult to determine $K_{3,3}$-free abelian groups.

\begin{lem}\label{lem:abelian}
A finite abelian group is $K_{3,3}$-free if and only if it is isomorphic to one
of the following groups $$ \mathbb{Z}_{p^{i}}{\ }(0\leq i\leq 6), \qquad
\mathbb{Z}_{p^{3}}\times \mathbb{Z}_{q}, \qquad \mathbb{Z}_{p^{2}}\times
\mathbb{Z}_{q}, \qquad \mathbb{Z}_{p}\times \mathbb{Z}_{q}, \qquad
\mathbb{Z}_{9}\times \mathbb{Z}_{3}, \qquad \mathbb{Z}_{4}\times
\mathbb{Z}_{2},$$ $$ \mathbb{Z}_{p}\times \mathbb{Z}_{p},\qquad \qquad
\mathbb{Z}_{p}\times \mathbb{Z}_{q}\times \mathbb{Z}_{r}, \qquad  \qquad
\mathbb{Z}_{2}\times \mathbb{Z}_{2}\times \mathbb{Z}_{p}{\ }(p\neq 2)$$ where $p$, $q$, and $r$ are distinct primes.
\end{lem}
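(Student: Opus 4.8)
The plan is to combine the observation that $K_{3,3}$-freeness is inherited by subgroups with the fundamental theorem of finite abelian groups, treating the cyclic and the non-cyclic cases in turn. The main tool for checking that a group \emph{is} $K_{3,3}$-free will be the trivial remark that every vertex of $K_{3,3}$ has degree $3$, so a graph in which fewer than six vertices have degree $\geq 3$ cannot contain $K_{3,3}$; in the opposite direction I will rule out a group by exhibiting a copy of $K_{3,3}$, or of a complete graph $K_m$ with $m \geq 6$, among the proper nontrivial subgroups of $G$ or of a subgroup of $G$.

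For the cyclic case $G \cong \mathbb{Z}_n$ I would use that the subgroups are indexed by the divisors of $n$, two of them adjacent exactly when the corresponding divisors share a prime factor; in particular, for each prime $p \mid n$ the subgroups whose order is divisible by $p$ form a clique. If $n = p^i$ then $\Gamma(\mathbb{Z}_n) = K_{i-1}$, which is $K_{3,3}$-free iff $i \leq 6$. If $n = p^a q^b$ with $p \neq q$, I would compute that the complement of $\Gamma(\mathbb{Z}_n)$ is the complete bipartite graph $K_{a,b}$ joining the $a$ pure $p$-power subgroups to the $b$ pure $q$-power subgroups, together with $ab-1$ isolated vertices; since $\overline{K_{3,3}} = K_3 \sqcup K_3$, a short case check then shows $\Gamma(\mathbb{Z}_n)$ is $K_{3,3}$-free iff $\{a,b\} \in \{\{1,1\},\{1,2\},\{1,3\}\}$, i.e. $n \in \{pq, p^2 q, p^3 q\}$. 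Finally, if $n$ has at least three distinct prime divisors, then either $n = pqr$ is squarefree with three prime factors --- in which case $\Gamma(\mathbb{Z}_n)$ has only three vertices of degree $\geq 3$ and so is $K_{3,3}$-free --- or some prime occurs to a power $\geq 2$, or there are at least four distinct primes, and in those last two cases one of the prime-cliques already has at least seven members, giving a $K_7$. This reproduces exactly the cyclic entries of the list.

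In the non-cyclic $p$-group case $G$ contains $\mathbb{Z}_p \times \mathbb{Z}_p$. If $|G| = p^2$ then $G = \mathbb{Z}_p \times \mathbb{Z}_p$ and $\Gamma(G)$ has no edges. If the $p$-rank of $G$ is at least $3$ then $G \supseteq \mathbb{Z}_p^3$, whose $p^2+p+1 \geq 7$ subgroups of order $p^2$ pairwise meet nontrivially and hence form a $K_7$. Otherwise $G \cong \mathbb{Z}_{p^a} \times \mathbb{Z}_{p^b}$ with $a \geq b \geq 1$. If $a+b \geq 4$, I would build a $K_{3,3}$ from the $p+1 \geq 3$ maximal subgroups, all of which contain the (nontrivial) Frattini subgroup $\Phi(G)$: three of these maximal subgroups on one side, and three distinct proper subgroups of $G$ each meeting $\Phi(G)$ nontrivially on the other (subgroups of order $p$ inside $\Phi(G)$ when $\Phi(G)$ is non-cyclic, or $\Phi(G)$ itself together with two smaller subgroups meeting it when $\Phi(G)$ is cyclic --- all easy to produce). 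If $a+b = 3$, i.e. $G = \mathbb{Z}_{p^2} \times \mathbb{Z}_p$, the $p+1$ maximal subgroups form a $K_{p+1}$, giving a $K_6$ as soon as $p \geq 5$; for $p \in \{2,3\}$ I would verify directly that $\Gamma(\mathbb{Z}_4 \times \mathbb{Z}_2)$ and $\Gamma(\mathbb{Z}_9 \times \mathbb{Z}_3)$ have respectively $4$ and $5$ vertices of degree $\geq 3$, hence are $K_{3,3}$-free.

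For the non-cyclic case with at least two prime divisors, a non-cyclic Sylow subgroup $P$ of $G$ is itself $K_{3,3}$-free, so by the previous step $P \in \{\mathbb{Z}_p \times \mathbb{Z}_p, \mathbb{Z}_4 \times \mathbb{Z}_2, \mathbb{Z}_9 \times \mathbb{Z}_3\}$. Picking another prime $q \mid |G|$ and a subgroup $\mathbb{Z}_q \leq G$ of order $q$, so that $P \times \mathbb{Z}_q \leq G$, I would note: if $P \in \{\mathbb{Z}_4 \times \mathbb{Z}_2, \mathbb{Z}_9 \times \mathbb{Z}_3\}$ the subgroups of $P \times \mathbb{Z}_q$ containing $\mathbb{Z}_q$ form a clique of size at least $7$; and if $P = \mathbb{Z}_p \times \mathbb{Z}_p$ with $p \geq 3$ then $P$ has four subgroups $A_1,\dots,A_4$ of order $p$, and $\{A_1 \mathbb{Z}_q, A_2 \mathbb{Z}_q, A_3 \mathbb{Z}_q\}$ together with $\{\mathbb{Z}_q, P, A_4 \mathbb{Z}_q\}$ span a $K_{3,3}$. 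Hence $P = \mathbb{Z}_2 \times \mathbb{Z}_2$ and, the same reasoning forbidding any non-cyclic Sylow at an odd prime, $G \cong \mathbb{Z}_2 \times \mathbb{Z}_2 \times H$ with $H$ cyclic of odd order; were $H$ divisible by $q^2$ or by two distinct primes $q \neq r$, then $G$ would contain $\mathbb{Z}_2 \times \mathbb{Z}_2 \times \mathbb{Z}_{q^2}$ (which contains a $K_{3,3}$ by the same construction, with $\mathbb{Z}_{q^2}$ playing the role of $A_4 \mathbb{Z}_q$) or $\mathbb{Z}_2 \times \mathbb{Z}_2 \times \mathbb{Z}_{qr}$ (which contains a $K_9$, namely the subgroups containing a fixed $\mathbb{Z}_q$). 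Thus $G \cong \mathbb{Z}_2 \times \mathbb{Z}_2 \times \mathbb{Z}_q$, whose intersection graph has only $5$ vertices of degree $\geq 3$ and is therefore $K_{3,3}$-free. I expect the main obstacle to be not any single argument but the bookkeeping near the threshold, since groups such as $\mathbb{Z}_{p^6}$, $\mathbb{Z}_{p^3} \times \mathbb{Z}_q$, $\mathbb{Z}_4 \times \mathbb{Z}_2$, $\mathbb{Z}_9 \times \mathbb{Z}_3$ and $\mathbb{Z}_2 \times \mathbb{Z}_2 \times \mathbb{Z}_q$ each sit just barely on the $K_{3,3}$-free side and must be separated cleanly from their slightly larger relatives; the uniform degree count on one side and a short catalogue of explicit $K_{3,3}$'s and $K_m$'s with $m \geq 6$ on the other should keep that under control.
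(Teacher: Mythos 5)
Your proposal is correct, and it follows the same overall decomposition as the paper (cyclic groups, then non-cyclic $p$-groups, then non-cyclic groups with several prime divisors), but several of your local arguments take a genuinely different and somewhat cleaner route. For cyclic groups you analyse the divisor lattice via the complement graph $K_{a,b}\sqcup \overline{K_{ab-1}}$, where the paper works with explicit generators case by case; both reach the same list of admissible orders. For the positive direction you use the uniform criterion that a $K_{3,3}$ requires six vertices of degree at least $3$, where the paper reads $K_{3,3}$-freeness of $\mathbb{Z}_9\times\mathbb{Z}_3$, $\mathbb{Z}_4\times\mathbb{Z}_2$ and $\mathbb{Z}_2\times\mathbb{Z}_2\times\mathbb{Z}_p$ off explicit pictures of the graphs; your degree counts ($5$, $4$ and $5$ respectively) check out. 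The most substantive divergence is in the multi-prime non-cyclic case: the paper first restricts $|G|$ to the admissible cyclic orders via a correspondence of subgroup orders and then treats $|G|=p^3q$ and $|G|=p^2q$ separately (using a $K_{p+2}$ for $p\geq 5$ and a special $K_{3,3}$ for $p=3$ in Case II(e)), whereas you organize by the isomorphism type of a non-cyclic Sylow subgroup and give a single $K_{3,3}$ construction $\{A_1\mathbb{Z}_q,A_2\mathbb{Z}_q,A_3\mathbb{Z}_q\}$ versus $\{\mathbb{Z}_q,P,A_4\mathbb{Z}_q\}$ valid for all $p\geq 3$, then finish $\mathbb{Z}_2\times\mathbb{Z}_2\times H$ by excluding $q^2$ and $qr$ from $|H|$. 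I verified the delicate spots (the $\Phi(G)$-based $K_{3,3}$ for rank-two $p$-groups with $a+b\geq 4$, including the cyclic-Frattini subcase, and the clique counts for $P\times\mathbb{Z}_q$); they all hold, so the two proofs differ mainly in that yours trades the paper's explicit figures and order bookkeeping for uniform graph-theoretic counts.
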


\begin{proof}
Let $G$ be a finite abelian group and $p,q$, and $r$ be distinct prime numbers. 

\emph{Case I:} $G$ is a cyclic group. Then there is exactly one subgroup of $G$ of order $n$ for each divisor $n$ of $|G|$. Observe that $\Gamma(G)$ is $K_{3,3}$-free if $G$ is of order $p^i$ ($0\leq i \leq 6$),  $p^2q$, or $pq$; as the number of proper non-trivial subgroups of $G$ would be less than six in such cases. 

\emph{Case I (a):} $|G|=p^i$ ($i>6$). Then $\Gamma(G)$ contains a $K_6$ and cannot be $K_{3,3}$-free. 

\emph{Case I (b):} $G\cong\mathbb{Z}_{p^3q}$. Let $a$ and $b$ be two elements of $G$ of order $p^3$ and of order $q$ respectively. There are exactly six proper non-trivial subgroups of $G$ in this case and five of them, namely $\langle a^{p^2}\rangle$, $\langle a^p\rangle$, $\langle a\rangle$, $\langle a^{p^2},b\rangle$, and $\langle a^p,b\rangle$ form a complete graph in $\Gamma(G)$ as all of them contain $\langle a^{p^2}\rangle$. The remaining vertex $\langle b\rangle$ has degree two in the intersection graph and, therefore, $\Gamma(G)$ is $K_{3,3}$-free. Moreover, since $\Gamma(G)$ contains a $K_5$, it cannot be a proper subgroup of a $K_{3,3}$-free group. (Notice that in a larger group $G$ becomes a vertex and so vertices containing $\langle a^{p^2}\rangle$ form a subgraph containing a $K_6$.) 

\emph{Case I (c):} $G\cong\mathbb{Z}_{p^2q^2}$. Let $a$ and $b$ be two elements of $G$ of order $p^2$ and of order $q^2$ respectively. As in the previous case we have five subgroups forming a $K_5$, namely $\langle a^p\rangle$, $\langle a\rangle$, $\langle a^p,b^q\rangle$, $\langle a,b^q\rangle$, and $\langle a^p,b\rangle$. However, unlike the previous case, the subgroup $\langle b^q\rangle$ is linked by an edge with those last three subgroups forming the $K_5$. Therefore, $\Gamma(G)$ is not $K_{3,3}$-free in this case. 

\emph{Case I (d):} $G\cong\mathbb{Z}_{pqr}$. Let $a,b$, and $c$ be three elements of orders $p,q$, and $r$ respectively. The vertices of $\Gamma(G)$ are $\langle a\rangle$, $\langle b\rangle$, $\langle c\rangle$, $\langle a,b\rangle$, $\langle a,c\rangle$, and $\langle b,c\rangle$. As the valency of $\langle a\rangle$ is two, $\Gamma(G)$ is $K_{3,3}$-free in this case. However, if $G$ is a proper subgroup of a larger group, then maximal subgroups of $G$ together with $\langle a\rangle$, $\langle b\rangle$, and $G$ form a subgraph containing a $K_{3,3}$. 

To sum up, the only possible values for the order of a $K_{3,3}$-free cyclic group are $$ p^i\; (0\leq i\leq 6),\qquad p^3q,\qquad p^2q,\qquad pqr,\qquad pq.  $$ 

\emph{Case II:} $G$ is \emph{not} a cyclic group. Let us make a useful observation. If $G$ is a $K_{3,3}$-free abelian group of order $n$, then $n$ must be one of the above values. This is because for any pair of subgroups $A<B$ of $\mathbb{Z}_n$, there are corresponding subgroups $H<K$ of $G$ such that $|A|=|H|$ and $|B|=|K|$. 

\emph{Case II (a):} $G\cong\mathbb{Z}_p\times\mathbb{Z}_p\times\mathbb{Z}_p$. Observe that maximal subgroups of $G$ are dimension $2$ subspaces of $G$ considering it as a vector space over $\mathbb{F}_p$. Then, by a counting argument the number of maximal subgroups of $G$ is $[(p^3-1)(p^3-p)]/ [(p^2-1)(p^2-p)]=p^2+p+1$. However, by the product formula (see Remark~\ref{rem1}), any pair of maximal subgroups intersects non-trivially; and hence, they form a complete graph in $\Gamma(G)$. Thus, any group containing an elementary abelian subgroup of rank three cannot be $K_{3,3}$-free.

\emph{Case II (b):} $G\cong\mathbb{Z}_{p^2}\times\mathbb{Z}_p$. Let $a$ and $b$ be two generator elements for $G$ of order $p^2$ and of order $p$ respectively. Then, subgroups $\langle a^p\rangle$, $\langle a^p,b\rangle$, $\langle a\rangle$, $\langle ab\rangle,\dots,  \langle ab^{p-1}\rangle$ form a $K_{p+2}$ in $\Gamma(G)$. Hence, the only possible values of $p$ are $2$ and $3$, if $G$ is $K_{3,3}$-free. Actually, $G$ is $K_{3,3}$-free for those primes. Intersection graph of $\mathbb{Z}_9\times\mathbb{Z}_3$ is depicted in Figure~\ref{fig:Z9xZ3}. 

\emph{Case II (c):} $G$ is an abelian $p$-group which is not considered in Cases II (a) and (b). If $G\cong \mathbb{Z}_p\times\mathbb{Z}_p$ then $\Gamma(G)$ is $K_{3,3}$-free, since the intersection graph of a group of order $p^2$ consists of isolated vertices. Otherwise, $G$ contains a subgroup $H$ properly which is isomorphic to $\mathbb{Z}_{p^2}\times\mathbb{Z}_p$ with $p\in\{2,3\}$ by the previous cases. Let $a$ and $b$ be two generator elements for $H$ of order $p^2$ and of order $p$ respectively. Since $H$ is a proper subgroup of $G$, there exists an element $c\in G$ that does not lie in $H$. Now, if $c^p\in\langle a\rangle$, then $\langle a^p\rangle$, $\langle a^p,b\rangle$, $\langle a\rangle$, $\langle ab\rangle$, $\langle c\rangle$, and $H$ form a $K_6$ in $\Gamma(G)$. And if $c^p\notin\langle a\rangle$, then $\langle a^p\rangle$, $\langle a^p,b\rangle$, $\langle a\rangle$, $\langle ab\rangle$, $\langle a^p,c\rangle$, and $H$ form a $K_6$ in $\Gamma(G)$. 

\emph{Case II (d):} $|G|=p^3q$. Since $G$ is a non-cyclic abelian group, either $G\cong(\mathbb{Z}_p\times\mathbb{Z}_p\times\mathbb{Z}_p)\times\mathbb{Z}_q$ or $G\cong(\mathbb{Z}_{p^2}\times\mathbb{Z}_p)\times\mathbb{Z}_q$. However, the first case cannot occur if $G$ is $K_{3,3}$-free in virtue of Case II (a); and in the latter case $\langle a^p\rangle$, $\langle a^p,b\rangle$, $\langle a\rangle$, $\langle ab\rangle$, $\langle a,b\rangle$, and $\langle a^p,c\rangle$ form a $K_6$ in the intersection graph where $a,b$, and $c$ are some generators of $G$ of orders $p^2,p$, and $q$ respectively.

\emph{Case II (e):} $G\cong(\mathbb{Z}_p\times\mathbb{Z}_p)\times\mathbb{Z}_q$. Let $a$ and $b$ be two elements of $G$ generating a subgroup of order $p^2$, and let $c$ be an element of $G$ of order $q$. As in Case II (b), subgroups $\langle c\rangle$, $\langle a,c\rangle$, $\langle ab,c\rangle,\dots, \langle ab^{p-1},c\rangle$, and $\langle b,c\rangle$ form a $K_{p+2}$ in $\Gamma(G)$, and therefore, if $G$ is $K_{3,3}$-free then either $p=2$ or $p=3$. If $p=2$, then $G$ is $K_{3,3}$-free and its intersection graph presented in Figure~\ref{fig:Z2xZ2xZp}. However, if $p=3$, then subgroups $\langle c\rangle$, $\langle a,c\rangle$, $\langle ab,c\rangle$, $\langle ab^2,c\rangle$, $\langle b,c\rangle$ together with $\langle a,b\rangle$ form a subgraph containing $K_{3,3}$.

As abelian groups of order $pqr$ and of order $pq$ are necessarily cyclic, the proof is completed. 
\end{proof}

\begin{figure}
\centering
\begin{subfigure}{.5\textwidth}
  \centering
  \includegraphics[width=.8\textwidth]{./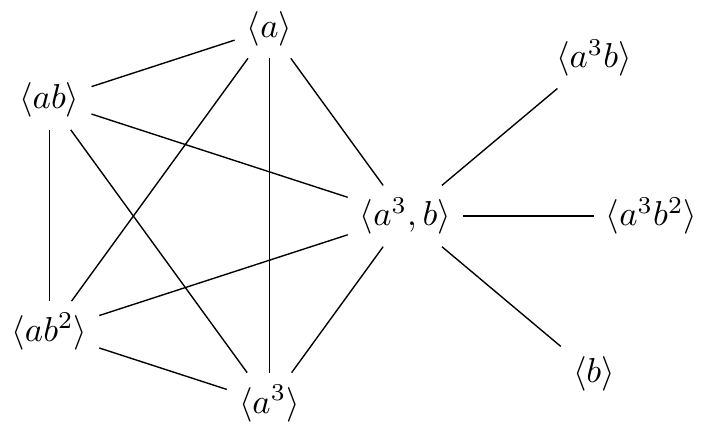}
  \caption{$\Gamma(\mathbb{Z}_9\times\mathbb{Z}_3)$}
  \label{fig:Z9xZ3}
\end{subfigure}%
\begin{subfigure}{.5\textwidth}
  \centering
  \includegraphics[width=.7\textwidth]{./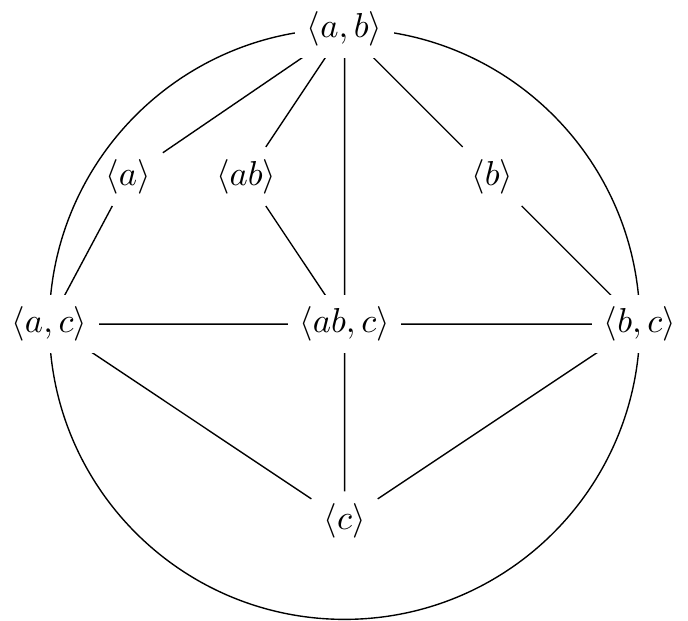}
  \caption{$\Gamma(\mathbb{Z}_2\times\mathbb{Z}_2\times\mathbb{Z}_p)$}
  \label{fig:Z2xZ2xZp}
\end{subfigure}
\caption{}
\label{fig:1}
\end{figure}

Let $G$ be a non-abelian $p$-group of order $p^{\alpha}$ ($\alpha>2$) which is $K_{3,3}$-free. Then, as the quotient of $G$ by the Frattini subgroup $\Phi(G)$ (i.e. the intersection of all maximal subgroups) is elementary abelian, $\Phi(G)$ is a non-trivial subgroup of $G$. That is, there are subgroups $K,L$ of $G$ such that $|K|=p$, $|L|=p^2$, and $K$ is contained by every maximal subgroup of $G$ as well as by $L$. If $p>3$, by Remark~\ref{rem2} there are at least $6$ maximal subgroups each containing a common subgroup, hence $\Gamma(G)$ contains a $K_7$ which is a contradiction. (Notice that for a graph being $K_{3,3}$-free is a more stringent condition than being $K_6$-free.) Thus $G$ is either a $2$-group or a $3$-group. Also, the exponent $\alpha=3$. To see this, suppose that $\alpha= 4$ and consider the case $p=3$. Then there are at least four maximal subgroups of $G$ of order $p^3$ and together with $K$ and $L$, they form a $K_6$ in the intersection graph. If $\alpha=4$, $p=2$ and $|\Phi(G)|=p$, then $G/\Phi(G)$ is elementary abelian of rank $3$ which is not listed in Lemma~\ref{lem:abelian}. Suppose that $|\Phi(G)|=p^2$. If $\Phi(G)$ is cyclic, then the subgroup $Z\leq \Phi(G)$ of order $p$  is in the center of $G$. Since the intersection graph of the quaternion group of order $16$ is $K_9$, we may further assume there are more than one minimal subgroups of $G$. Let $K$ be another minimal subgroup of $G$, then three maximal subgroups together with $Z$, $\Phi(G)$, and $ZK$ form a $K_6$ in $\Gamma(G)$. If $\Phi(G)$ is not cyclic, then we may take three maximal subgroups and the three subgroups of $\Phi(G)$ to form a $K_{3,3}$ in the intersection graph. Finally, $|\Phi(G)|=p^3$ implies $G$ is cyclic which is a contradiction. There are two non-abelian groups of order $8$, namely the dihedral group $D_8$ and the quaternion group $Q_8$; and also there are two non-abelian groups of order $27$, namely $(\mathbb{Z}_3\times \mathbb{Z}_3)\rtimes \mathbb{Z}_3$ and $\mathbb{Z}_9\rtimes \mathbb{Z}_3$. It can be verified that these groups are $K_{3,3}$-free (see Figure~\ref{fig:2} and Figure~\ref{fig:3}). Thus, we almost proved that

\begin{lem}\label{lem:nilpotent}
A finite non-abelian nilpotent group is $K_{3,3}$-free if and only if it is isomorphic to one of the following groups  $$D_8, \qquad\quad Q_8, \qquad\quad (\mathbb{Z}_3\times \mathbb{Z}_3)\rtimes \mathbb{Z}_3, \qquad\quad \mathbb{Z}_9\rtimes \mathbb{Z}_3.$$
\end{lem}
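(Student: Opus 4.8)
The plan is to reduce the classification of finite non-abelian nilpotent $K_{3,3}$-free groups to the case of $p$-groups, and then to analyze those directly using the Frattini subgroup and the Sylow-counting congruences from Remark~\ref{rem2}. Since a finite nilpotent group $G$ is the direct product of its Sylow subgroups, the first step is to observe that if $G$ has at least two distinct prime divisors $p$ and $q$ and a non-abelian Sylow subgroup, then $G$ contains a subgroup isomorphic to $P \times \mathbb{Z}_q$, where $P$ is a non-abelian $p$-group of order at least $p^3$. Picking a minimal subgroup $Z \le Z(P)$ of order $p$, the subgroups $Z$, $Z \times \mathbb{Z}_q$, together with any four distinct subgroups of $P$ containing $Z$ (which exist because $P$ has order at least $p^3$ and $Z \le \Phi(P)$ or at worst $Z$ lies in the many subgroups forced by Remark~\ref{rem2}) would form a $K_6$; more carefully, one exhibits a $K_{3,3}$ directly. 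So a non-abelian nilpotent $K_{3,3}$-free group must be a $p$-group. Furthermore, if $G = P \times A$ with $P$ non-abelian of order $\ge p^3$ and $A$ a non-trivial abelian $p$-group, one should check this also produces a forbidden subgraph unless $G$ itself is a small non-abelian $p$-group already on the list; in practice the argument shows $G$ must equal a non-abelian $p$-group with no extra direct factors.

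The second step is the $p$-group analysis, which is exactly the discussion carried out in the paragraph preceding the lemma statement. I would organize it as follows. Let $G$ be non-abelian of order $p^\alpha$ with $\alpha \ge 3$. Since $G/\Phi(G)$ is elementary abelian and $G$ is non-abelian (hence non-cyclic, so $\Phi(G) \neq G$) while also $\Phi(G) \neq 1$ (as $G$ is not elementary abelian of rank $\ge 2$... more precisely $\alpha \ge 3$ forces $\Phi(G)\ne 1$ for non-abelian $G$), we get subgroups $K < L$ with $|K| = p$, $|L| = p^2$, and $K \le \Phi(G)$, so $K$ lies in every maximal subgroup of $G$. By Remark~\ref{rem2}(a) the number of maximal subgroups of $G$ is $\equiv 1 \pmod p$; when $p \ge 5$ this is at least $6$, and those six maximal subgroups all contain $K$, giving a $K_6$ (indeed a $K_7$ with $L$ and $K$), contradiction. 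So $p \in \{2,3\}$. Next rule out $\alpha \ge 4$: for $p = 3$, there are at least four maximal subgroups (all of order $p^3 \ge p^3$, all containing $K$), and together with $K$ and $L$ this is a $K_6$; for $p = 2$ one splits on $|\Phi(G)| \in \{p, p^2, p^3\}$ exactly as in the excerpt — the $|\Phi(G)|=p$ case gives $G/\Phi(G)$ elementary abelian of rank $3$, contradicting Lemma~\ref{lem:abelian}; the $|\Phi(G)|=p^2$ case splits on whether $\Phi(G)$ is cyclic (handle $Q_{16}$ separately since $\Gamma(Q_{16}) = K_9$, otherwise produce a $K_6$) or not (three maximal subgroups and the three order-$p$... wait, order-$p$ and order-$p^2$ subgroups inside the Klein-four $\Phi(G)$ give a $K_{3,3}$); and $|\Phi(G)| = p^3$ forces $G$ cyclic. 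Hence $\alpha = 3$.

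The third step is to identify the groups of order $p^3$ for $p \in \{2,3\}$: the non-abelian ones are $D_8$, $Q_8$ (order $8$) and $(\mathbb{Z}_3 \times \mathbb{Z}_3) \rtimes \mathbb{Z}_3$, $\mathbb{Z}_9 \rtimes \mathbb{Z}_3$ (order $27$), and then verify each is genuinely $K_{3,3}$-free by inspecting its (small) subgroup lattice — this is the content of Figures~\ref{fig:2} and~\ref{fig:3} and amounts to a finite check: each such group has few enough proper non-trivial subgroups that one can read off the intersection graph and confirm it contains no $K_{3,3}$. Conversely, since all four groups appear, the classification is complete. I expect the main obstacle to be the $p=2$, $\alpha = 4$ case: the case split on $|\Phi(G)|$ and the need to treat $Q_{16}$ as an exception require a bit of care, and producing the explicit $K_6$ or $K_{3,3}$ subgraph in each subcase (choosing the right maximal subgroups and the right small subgroups so that all required adjacencies hold, i.e. all chosen subgroups pairwise intersect nontrivially on the appropriate side) is the fiddliest part of the argument; everything else is either a clean congruence count or a finite verification.
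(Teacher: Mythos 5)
Your proposal is correct and follows essentially the same route as the paper: the $p$-group analysis is exactly the paper's paragraph preceding the lemma, and your reduction to $p$-groups (a $K_6$ built from a central subgroup $Z$ of the non-abelian Sylow subgroup, the subgroup $Z\times\mathbb{Z}_q$, and four subgroups of $P$ containing $Z$) is the same construction the paper uses, merely applied before rather than after identifying the four candidate groups. The only loose end is your justification that four such subgroups of $P$ exist; this follows cleanly from the fact that $P/Z$ cannot be cyclic when $P$ is non-abelian and $Z\leq Z(P)$.
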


\begin{proof}
  Since a nilpotent group $G$ is the direct product of its Sylow subgroups, at least one of them must be non-abelian. However, $(\mathbb{Z}_3\times \mathbb{Z}_3)\rtimes \mathbb{Z}_3$ and $\mathbb{Z}_9\rtimes \mathbb{Z}_3$ both contains a $K_5$ in their intersections graphs, therefore cannot be a proper subgroup of $G$. Also, if $G$ contains $D_8$ properly, then the three maximal subgroups together with $D_8$ and $\Phi(D_8)$ form a $K_5$ in $\Gamma(G)$. If we take another minimal subgroup $K$ which is not a subgroup of $D_8$, then the subgroup $\Phi(D_8)K$ would be a sixth vertex which is connected by an edge with each vertices of $K_5$. Same argument is valid also for $Q_8$.
\end{proof}

\begin{figure}
\centering
\begin{subfigure}{.5\textwidth}
  \centering
  \includegraphics[width=.7\textwidth]{./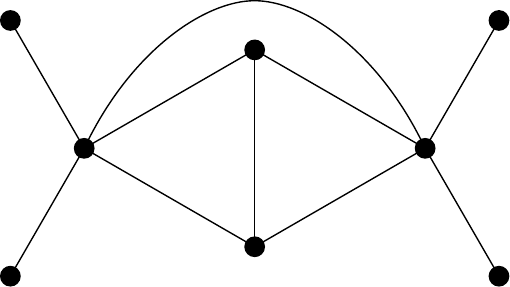}
  \caption{$\Gamma(D_8)$}
  \label{fig:D8}
\end{subfigure}%
\begin{subfigure}{.5\textwidth}
  \centering
  \includegraphics[width=.45\textwidth]{./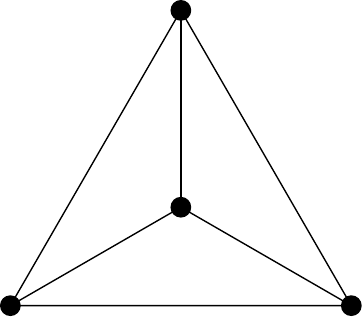}
  \caption{$\Gamma(Q_8)$}
  \label{fig:Q8}
\end{subfigure}
\caption{}
\label{fig:2}
\end{figure}

\begin{figure}
\centering
\begin{subfigure}{.5\textwidth}
  \centering
  \includegraphics[width=.8\textwidth]{./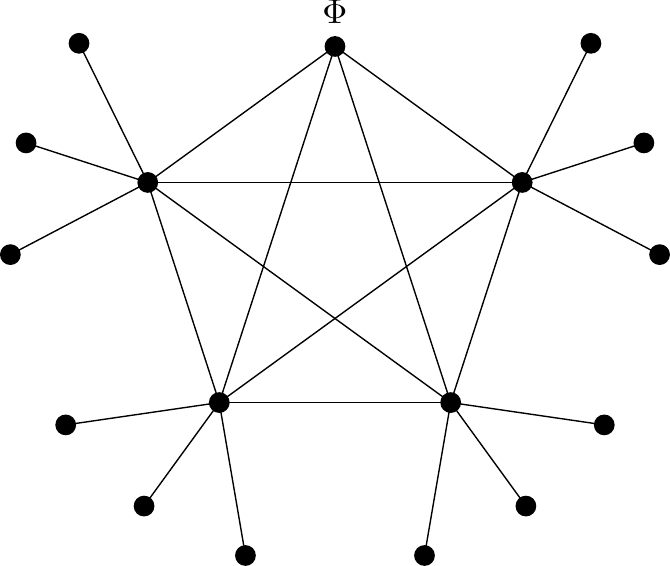}
  \caption{$\Gamma((\mathbb{Z}_3\times\mathbb{Z}_3)\rtimes\mathbb{Z}_3)$}
  \label{fig:Z3xZ3xiZ3}
\end{subfigure}%
\begin{subfigure}{.5\textwidth}
  \centering
  \includegraphics[width=.65\textwidth]{./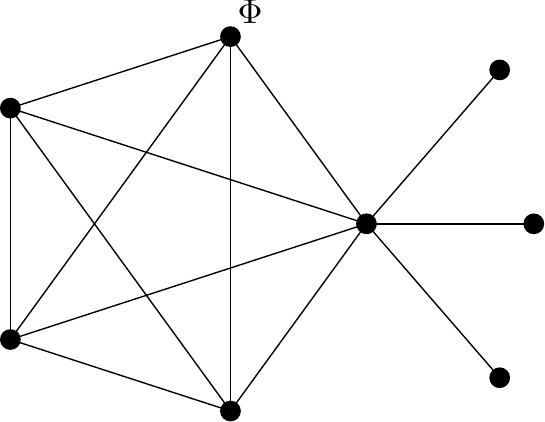}
  \caption{$\Gamma(\mathbb{Z}_9\rtimes\mathbb{Z}_3)$}
  \label{fig:Z9xiZ3}
\end{subfigure}
\caption{Vertices labelled with $\Phi$ represents Frattini subgroups}
\label{fig:3}
\end{figure}

\subsection*{Solvable groups}
\label{sec:solvable-groups}

Let $G$ be a finite non-nilpotent solvable group. Following \cite[Sec.~4]{KayacanYaraneri2015a}, we may reduce the number of cases substantially with regard to the orders of the groups. Let $N$ be a minimal subgroup of $G$. By Remark~\ref{rem4}, $N$ is an elementary abelian group and as a subgroup of $G$ it is $K_{3,3}$-free. Moreover, $N$ is either of rank $1$ or rank $2$ in virtue of Lemma~\ref{lem:abelian}. It is well-known that there is a correspondence between the subgroups of $G$ containing $N$ and the subgroups of $G/N$. Now we make a very useful observation
\begin{itemize}\itemsep0em 
\item the rank of $N$ is $1 \implies \#$ subgroups of $G/N$ is at most $6$;
\item the rank of $N$ is $2 \implies \#$ subgroups of $G/N$ is at most $3$.
\end{itemize}
As a consequence of Sylow and Hall Theorems, the only possible values of $|G/N|$ are $p^i\; (0\leq i\leq5)$, $pq$, and $p^2q$ where $p$ and $q$ distinct prime numbers. Moreover, if $N$ is of rank $2$, then $G/N$ is isomorphic to a cyclic group of prime or prime squared order. Therefore, the only possible cases for the order of $G$ are $$ p^5q, \quad p^4q, \quad p^2qr; \quad p^3q, \quad p^2q, \quad p^2q^2, \quad pqr, \quad pq.$$

In \cite{RajkumarDevi}, the $K_{2,3}$-free groups are determined as a sublist of $K_5$-free groups. Our preceding discussion made it apparent that if $G$ is a $K_{2,3}$-free group and $N\triangleleft G$ is elementary abelian of rank $2$, then either $|G:N|= 1 \text{ or } p$ where $p$ denotes a prime number.

Returning to the possible orders of the non-nilpotent solvable $K_{3,3}$-free groups, we can still eliminate some of the cases by \emph{ad hoc} arguments.

\begin{lem}
  There are no finite non-nilpotent solvable group which is $K_{3,3}$-free and of order $$ p^5q,\qquad\quad p^4q,\qquad \text{ or }\qquad p^2qr $$ where $p,q,r$ distinct prime numbers.
\end{lem}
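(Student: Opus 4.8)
The plan is to exhaust the possibilities for a minimal normal subgroup $N$ of $G$, exploiting the reduction set up just above: $N$ is elementary abelian (Remark~\ref{rem4}), hence --- being a subgroup of the $K_{3,3}$-free group $G$ --- of rank $1$ or $2$ by Lemma~\ref{lem:abelian}, and the number of subgroups of $G/N$ is bounded accordingly.

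Most shapes of $N$ are ruled out immediately. If $N$ has rank $2$, then $|N|$ is a prime square dividing $|G|$, forcing $|N|=p^2$, so $|G/N|$ equals $p^3q$, $p^2q$, or $qr$ in the three cases; but a rank-$2$ minimal normal subgroup forces $G/N$ to be cyclic of prime or prime-squared order, which none of these is. Hence $N\cong\mathbb{Z}_\ell$ for a prime $\ell$. If $\ell=p$, then $|G/N|$ is $p^4q$, $p^3q$, or $pqr$, and none of these lies in the list $p^i\ (0\le i\le 5),\, pq,\, p^2q$ of admissible orders for a quotient by a rank-$1$ $N$. So $\ell=q$ (or, for $|G|=p^2qr$, $\ell=r$, which is symmetric), and $|G/N|$ equals $p^5$, $p^4$, or $p^2r$. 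A short check --- via Remark~\ref{rem2} for the $p$-group cases and elementary Sylow counting for order $p^2r$ --- shows that a group of any of these orders with at most six subgroups must be cyclic; since $G/N$ has at most six subgroups, $G/N$ is cyclic.

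It remains to produce a $K_{3,3}$ in this surviving configuration. Now $N$ is a normal Sylow subgroup of index coprime to $|N|$, so $G$ has a Hall complement $H$ (Remark~\ref{rem5}) with $H\cap N=1$, $HN=G$, and $H\cong G/N$ cyclic; write $H=\langle x\rangle$ and $N=\langle y\rangle$. Because $N\trianglelefteq G$, the product $NK$ is a proper non-trivial subgroup of $G$ for every $K\le H$, and together with the subgroups of $H$ these supply all the vertices I need. For $|G|=p^5q$ or $p^4q$, put $p^k=|H|$; then the six subgroups $\langle x^{p^{k-3}}\rangle,\,\langle x^{p^{k-2}}\rangle,\,\langle x^{p^{k-1}}\rangle$ and $N\langle x^{p^{k-3}}\rangle,\,N\langle x^{p^{k-2}}\rangle,\,N\langle x^{p^{k-1}}\rangle$ --- pairwise distinct by their orders --- all contain the subgroup $\langle x^{p^{k-1}}\rangle\neq 1$, hence span a $K_6$, which contains $K_{3,3}$. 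For $|G|=p^2qr$ we have $|H|=p^2r$, and the two sets $\{\langle x^{pr}\rangle,\ N,\ \langle x\rangle\}$ and $\{N\langle x^{pr}\rangle,\ N\langle x^{r}\rangle,\ N\langle x^{p}\rangle\}$ form the parts of a $K_{3,3}$, where $\langle x^{pr}\rangle,\langle x^{r}\rangle,\langle x^{p}\rangle$ are the subgroups of the cyclic group $\langle x\rangle$ of orders $p$, $p^2$, $pr$: each subgroup in the second set contains $\langle x^{pr}\rangle$ and contains $N$, and meets $\langle x\rangle$ nontrivially (already in $\langle x^{pr}\rangle$), so it is joined to all three vertices of the first set, while the six subgroups are pairwise distinct by their orders. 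In every case $\Gamma(G)$ contains $K_{3,3}$, contradicting the hypothesis, and the lemma follows.

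The only part requiring genuine bookkeeping is the implication ``at most six subgroups $\Rightarrow$ cyclic''. For a non-cyclic $p$-group one either gets at least $p+1\ge 3$ subgroups of order $p$ (Remark~\ref{rem2}) --- which, added to one subgroup of each intermediate order together with $1$ and the whole group, already exceeds six --- or else a generalized quaternion group of order $16$ or $32$, which has many more than six subgroups; and a non-cyclic group of order $p^2r$ has a non-normal Sylow subgroup, hence at least three subgroups of that order and, with the rest, again more than six. Everything else is either immediate from the reduction or a routine verification that the exhibited subgroups are distinct proper non-trivial vertices with the stated adjacencies.
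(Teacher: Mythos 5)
Your proof is correct and follows essentially the same strategy as the paper: pin down the minimal normal subgroup $N$ (rank and order) via the subgroup-count bound on $G/N$, conclude $G/N$ is cyclic, and then exhibit an explicit $K_6$ or $K_{3,3}$ among subgroups of the cyclic Hall complement and their products with $N$ (the paper uses a chain $A<B<C<D$ in a cyclic Sylow $p$-subgroup for the first two orders, and for $p^2qr$ adds a Hall subgroup $T$ of order $p^2q$ via the product formula, but these are cosmetic differences). One sentence of yours is literally false: a non-cyclic group of order $p^2r$ need not have a non-normal Sylow subgroup (e.g.\ $\mathbb{Z}_p\times\mathbb{Z}_p\times\mathbb{Z}_r$); however, in that residual case the Sylow $p$-subgroup is elementary abelian and the group visibly has far more than six subgroups, so the conclusion you need survives with a one-line patch.
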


\begin{proof}
  Let $G$ be a $K_{3,3}$-free group, and let $N$ be a minimal normal subgroup of $G$. First, consider the case $|G|=p^4q$. Clearly, $|N|=q$. Since the number of the subgroups of $G/N$ is at most $6$, $G/N$ is isomorphic to $\mathbb{Z}_{p^4}$. Let $A<B<C<D$ be a chain of non-trivial $p$-subgroups of $G$. Then, one may form $NA$, $NB$ and $NC$ which are proper subgroups. As the orders of those groups are different, they form a $K_7$ in $\Gamma(G)$ (intersection of any two of them contains $A$). Therefore, $G$ is not $K_{3,3}$-free. Similar arguments can also be applied for $|G|=p^5q$ case.

Next, suppose $|G|=p^2qr$. Clearly, $N$ is not a $p$-group. Without loss of generality we may assume that $|N|=r$. Then $|G/N|=p^2q$. If $G/N$ is not cyclic, then the number of subgroups of $G/N$ exceeds $6$. This is clear if Sylow $p$-subgroup of $G/N$ is elementary abelian. And if $G/N$ is not abelian, then there must be a non-normal subgroup (since there is no subgroup of $G/N$ isomorphic to $Q_8$, it is not Hamiltonian) implying there are more than $6$ subgroups. Hence, $G/N$ must be cyclic. In this case there are subgroups of $G/N$ of orders $p^2$, $pq$, $p$, and $q$. Then, by the Correspondence Theorem there are four subgroups containing $N$, say $A$, $B$, $C$, $D$ of orders $p^2r$, $pqr$, $pr$, and $qr$ respectively. Let $T$ be a subgroup of order $p^2q$. By the product formula $T$ intersects $A$, $B$, and $C$ non-trivially. That is, $A, B, C, D, N, T$ span a subgraph in $\Gamma(G)$ containing $K_{3,3}$.
\end{proof}

Now we examine the other cases. 

\begin{lem}\label{lem:p3q}
  Let $ G $ be a non-nilpotent group of order $ p^3q $ where $p$ and $q$ are distinct prime numbers. Then, $ G $ is $K_{3,3}$-free if and only if it is isomorphic to $$\mathbb{Z}_q \rtimes _{\alpha}\mathbb{Z}_{p^3}=\langle a, b \mid a^q = b^{p^3} =1, bab^{-1} = a^{\alpha} \rangle$$ where $p^3$ divides $q-1$ and $\alpha$ is any integer not divisible by $q$ whose order in the unit group $\mathbb{Z}_q^*$ of $\mathbb{Z}_q$ is $p^3$.
\end{lem}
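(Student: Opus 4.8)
The plan is to prove the two implications separately, the ``if'' direction by pinning down the subgroup lattice of the candidate group, and the ``only if'' direction by a reduction driven by two soft observations.

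\textbf{The ``if'' direction.} Write $G=\langle a\rangle\rtimes\langle b\rangle$ with $|a|=q$, $|b|=p^{3}$ and $b$ acting on $\langle a\rangle$ with order $p^{3}$, so $C_G(\langle a\rangle)=\langle a\rangle$. First I would record that $G$ has no element whose order is divisible by both $p$ and $q$ (such a $g$ would give $g^{q}\in C_G(\langle a\rangle)=\langle a\rangle$, forcing the $p$-part trivial); counting the resulting $q(p^{3}-1)$ elements of $p$-power order then shows that the $q$ Sylow $p$-subgroups $P_{1},\dots,P_{q}\cong\mathbb{Z}_{p^{3}}$ intersect pairwise trivially. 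Next one checks that the only subgroup of order $pq$ is $M:=\langle a,b^{p^{2}}\rangle$, the only one of order $p^{2}q$ is $K:=\langle a,b^{p}\rangle$, and that $M$ and $K$ each contain every subgroup of $p$-power order of $G$. Hence $\Gamma(G)$ is: the two vertices $M,K$, each adjacent to all others; the vertex $\langle a\rangle$, adjacent only to $M$ and $K$; and, for each $i$, a triangle on the chain $L_{i}\subset Q_{i}\subset P_{i}$ inside $P_{i}$, with the $q$ triangles mutually non-adjacent. Deleting $M$ and $K$ leaves a graph of maximum degree $2$ (disjoint triangles plus one isolated vertex), which contains no $4$-cycle; a short case analysis on how the two parts of a putative $K_{3,3}$ can meet $\{M,K\}$ shows each alternative forces either a vertex with three non-universal neighbours or a $C_{4}$ among the non-universal vertices, both impossible. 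So $G$ is $K_{3,3}$-free.

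\textbf{Two observations for the converse.} First, if $M\triangleleft G$ has prime order and $G/M$ has at least seven subgroups, then the $\ge 6$ proper non-trivial subgroups of $G$ containing $M$ pairwise intersect non-trivially, so $\Gamma(G)\supseteq K_{6}\supseteq K_{3,3}$ and $G$ is not $K_{3,3}$-free. Second, a non-nilpotent group of order $p^{2}q$ has at least seven subgroups; this I would prove by a short case analysis according to which Sylow subgroup is normal and whether the Sylow $p$-subgroup is $\mathbb{Z}_{p^{2}}$ or $\mathbb{Z}_{p}\times\mathbb{Z}_{p}$ (e.g.\ if the Sylow $q$ is normal then $n_{p}=q\ge 3$, contributing $q$ Sylow $p$-subgroups on top of $\{1\}$, a normal $\mathbb{Z}_q$, a subgroup of order $p$, and $G$).

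\textbf{The ``only if'' direction.} Let $G$ be non-nilpotent of order $p^{3}q$ and $K_{3,3}$-free, and let $N$ be a minimal normal subgroup; by Remark~\ref{rem4} and Lemma~\ref{lem:abelian}, $N$ is elementary abelian of rank $1$ or $2$. A rank-$2$ $N$ must have order $p^{2}$ (since $\ell^{2}\mid p^{3}q$ forces $\ell=p$), so $|G/N|=pq$; choosing two distinct proper non-trivial subgroups of $G/N$ together with three of the $p+1\ge 3$ order-$p$ subgroups of $N$ displays a $K_{3,3}$, a contradiction. So $|N|$ is $p$ or $q$. If $|N|=q$ then $G=N\rtimes P$ with $P\cong G/N$ a Sylow $p$-subgroup, and by the first observation $P$ has $\le 6$ subgroups, forcing $P\cong\mathbb{Z}_{p^{3}}$ or $P\cong Q_{8}$. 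If $P\cong Q_{8}$ (so $p=2$), non-nilpotence makes the action factor through $Q_{8}^{\mathrm{ab}}\cong\mathbb{Z}_{2}\times\mathbb{Z}_{2}$ with image of order $2$, the common central involution $z_{0}$ of the Sylow $2$-subgroups generates a normal subgroup with $G/\langle z_{0}\rangle\cong D_{2q}\times\mathbb{Z}_{2}$ non-nilpotent of order $2^{2}q$, and the two observations give a contradiction. If $P\cong\mathbb{Z}_{p^{3}}=\langle b\rangle$, then $b$ acts with order $p^{k}$, $1\le k\le 3$; for $k\le 2$ the element $b^{p^{2}}$ centralises $N$, so $\langle b^{p^{2}}\rangle$ is a normal (central) subgroup of order $p$ with $G/\langle b^{p^{2}}\rangle\cong\mathbb{Z}_{q}\rtimes\mathbb{Z}_{p^{2}}$ non-nilpotent of order $p^{2}q$, again a contradiction. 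Thus $k=3$, i.e.\ $p^{3}\mid q-1$ and $\alpha$ has order $p^{3}$: exactly the group claimed. Finally, if $|N|=p$ then $|G/N|=p^{2}q$ has $\le 6$ subgroups, hence is nilpotent, hence is $\mathbb{Z}_{p^{2}}\times\mathbb{Z}_{q}$ or $\mathbb{Z}_{p}\times\mathbb{Z}_{p}\times\mathbb{Z}_{q}$; the latter has $\ge 7$ subgroups, so $G/N\cong\mathbb{Z}_{p^{2}q}$ is cyclic. Then the preimage of its order-$p^{2}$ subgroup is a normal Sylow $p$-subgroup $P$ of $G$, and since $N\le Z(P)$ with $P/N$ cyclic, $P$ is abelian, so $P\cong\mathbb{Z}_{p^{3}}$ or $\mathbb{Z}_{p^{2}}\times\mathbb{Z}_{p}$, with $G=P\rtimes\mathbb{Z}_{q}$ acting non-trivially. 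In each case $P$ has a characteristic (hence normal in $G$) subgroup $M$ of order $p$ — the unique one, resp.\ $\Phi(P)=pP$ — and $G/M$ has order $p^{2}q$ with Sylow $p$-subgroup $\mathbb{Z}_{p^{2}}$ or $\mathbb{Z}_{p}\times\mathbb{Z}_{p}$ and is non-nilpotent (the kernel of $\mathrm{Aut}(P)\to\mathrm{Aut}(P/M)$ is a $p$-group, so the order-$q$ action persists) — or, when $P/M\cong\mathbb{Z}_p^2$, has $\ge 7$ subgroups regardless. Either way the observations rule this out. Hence the only possibility is $\mathbb{Z}_{q}\rtimes\mathbb{Z}_{p^{3}}$ with $p^{3}\mid q-1$, as asserted.

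\textbf{Main obstacle.} I expect the technical heart to be the second observation (the subgroup count for non-nilpotent groups of order $p^{2}q$) together with the repeated verifications that the quotients $G/M$ constructed above are genuinely non-nilpotent of order $p^{2}q$, so that the observation applies; and, on the other side, the careful (if routine) determination of the subgroup lattice of $\mathbb{Z}_{q}\rtimes\mathbb{Z}_{p^{3}}$ underlying the ``if'' direction.
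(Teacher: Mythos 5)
Your proof is correct, but it is organized quite differently from the paper's. In the ``only if'' direction the paper argues case by case by exhibiting explicit large cliques inside $G$ itself: for $|N|=p$ it analyzes $\mathrm{N}_G(Q)$ and its $p$ conjugates to build a $K_{p+3}$ (using $p>q$ to get $p\geq 3$), and for $|N|=q$ with $G/N\cong\mathbb{Z}_{p^3}$ it shows directly that any non-trivial intersection of two Sylow $p$-subgroups yields a sixth mutually intersecting subgroup. You instead isolate two reusable facts --- that a normal subgroup $M$ of prime order with $G/M$ having at least seven subgroups forces a $K_6$, and that every non-nilpotent group of order $p^2q$ has at least seven subgroups --- and then kill each bad configuration by locating a suitable central or characteristic subgroup $M$ of prime order with $G/M$ of order $p^2q$. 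This buys a more uniform argument that avoids the normalizer computation entirely; the paper's route is more hands-on but needs no auxiliary counting lemma. In the ``if'' direction the paper essentially delegates to Figure~\ref{fig:ZqxiZp3}, whereas you derive the subgroup lattice and give an actual graph-theoretic verification (two universal vertices plus disjoint triangles), which is more self-contained. Two small slips to repair: in the rank-two case you list only five vertices (two preimages from $G/N$ and three order-$p$ subgroups of $N$); you need $N$ itself as the sixth vertex to complete the $K_{3,3}$. And the assertion that $M$ (of order $pq$) ``contains every subgroup of $p$-power order'' is literally false --- it contains every subgroup of order $p$, which is all your adjacency argument actually uses.
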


\begin{proof}
Suppose that $G$ is $K_{3,3}$-free. Clearly, the order of the minimal normal subgroup $N$ cannot be $p^2$. Therefore, we only need to consider the following two cases.

\emph{Case I:} $|N|=p$. Since the number of subgroups of $G/N$ is at most $6$, we have $G/N\cong \mathbb{Z}_{p^2q}$. Let $P$ be a Sylow $p$-subgroup of $G$ containing $N$; and let $Q$ be a Sylow $q$-subgroup. By the Correspondence Theorem $P$ is the unique Sylow $p$-subgroup containing $N$, hence it is normal in $G$. As $G$ is not a nilpotent group, $Q$ is not a normal subgroup of $G$. Let $H$ and $K$ be the subgroups containing $N$ with orders $pq$ and $p^2q$ respectively. Since $NQ^g$ is a subgroup of order $pq$ for any conjugate $Q^g$ of $Q$ which contains $N$ and since $H$ is the unique subgroup of order $pq$ containing $N$ by the Correspondence Theorem, $H$ contains all conjugates of $Q$ and this implies the number of Sylow $q$-subgroups of $G$ is $|H:N|=p$. In particular $H\cong \mathbb{Z}_p\rtimes\mathbb{Z}_q$ and $p>q$. Then, the normalizer $\mathrm{N}_G(Q)$ has order $p^2q$. Moreover, since $\mathrm{N}_G(Q)$ is self-normalizing by Remark~\ref{rem3}, there are $p$ conjugates of $\mathrm{N}_G(Q)$ which are different from the normal subgroup $K$. By the product formula, $P,H,K$ and $p$ conjugates of $\mathrm{N}_G(Q)$ pairwise intersect non-trivially. In other words, those subgroups form a $K_{p+3}$ on $\Gamma(G)$. As $p\geq 3$, $G$ cannot be $K_{3,3}$-free in this case.

\emph{Case II:} $|N|=q$. Since $|G:N|=p^3$ and since the number of subgroups of $G/N$ is at most six, $G/N$ is isomorphic to either $\mathbb{Z}_{p^3}$ or $Q_8$. Notice that a group with a unique maximal subgroup is necessarily cyclic and by Remark~\ref{rem1} a non-cyclic $p$-group has at least three maximal subgroups. Therefore, $G/N$ must have a unique minimal subgroup even if it is not cyclic.  

\emph{Case II (a):} $G/N\cong \mathbb{Z}_{p^3}$. Take three non-trivial $p$-subgroups $A<B<C$ and form $NA$ and $NB$. As the orders of those groups are different, they form a $K_5$ in $\Gamma(G)$. Also, since $G$ is not nilpotent, there are more than one Sylow $p$-subgroups of $G$. If $A$ is contained by a Sylow $p$-subgroup $D$ other than $C$, then together with $D$ we have $6$ proper non-trivial subgroups mutually intersecting non-trivially. On the other hand, if any two Sylow $p$-subgroups intersect trivially, then $\Gamma(G)$ is $K_{3,3}$-free. Notice that $NA$ is the unique subgroup of $G$ of order $pq$ and $NB$ is the unique subgroup of $G$ of order $p^2q$. Let $Q=\langle a\rangle$ and $P=\langle b\rangle$. We want to write a presentation for $G$. Since $Q$ is normal, $bab^{-1}=a^{\alpha}$ for some integer $\alpha$ not divisible by $q$. Observe that, $b^kab^{-k}=a^{\alpha^k}$ for any integer $k$. This implies $\alpha^{p^3}\equiv 1 \pmod{q}$, i.e. the order of $\alpha$ in the unit group $\mathbb{Z}_q^*$ divides $p^3$. Moreover, its order is exactly $p^3$, as otherwise, the intersection of some Sylow $p$-subgroups would be non-trivial. Conversely, the group $$\mathbb{Z}_q \rtimes _{\alpha}\mathbb{Z}_{p^3}=\langle a, b \mid a^q = b^{p^3} =1, bab^{-1} = a^{\alpha} \rangle$$ has the subgroup structure described above and it is $K_{3,3}$-free. See Figure~\ref{fig:ZqxiZp3}.   

\emph{Case II (b):} $G/N\cong Q_8$. Then, there are $5$ non-trivial subgroups of a Sylow $p$-subgroup each containing a unique minimal subgroup $A$. Together with $NA$ we have $6$ subgroups forming a $K_6$ in $\Gamma(G)$. Thus, there is no $K_{3,3}$-free group in this case. 
\end{proof}

\begin{figure}[!ht]
  \centering
    \includegraphics[width=0.7\textwidth]{./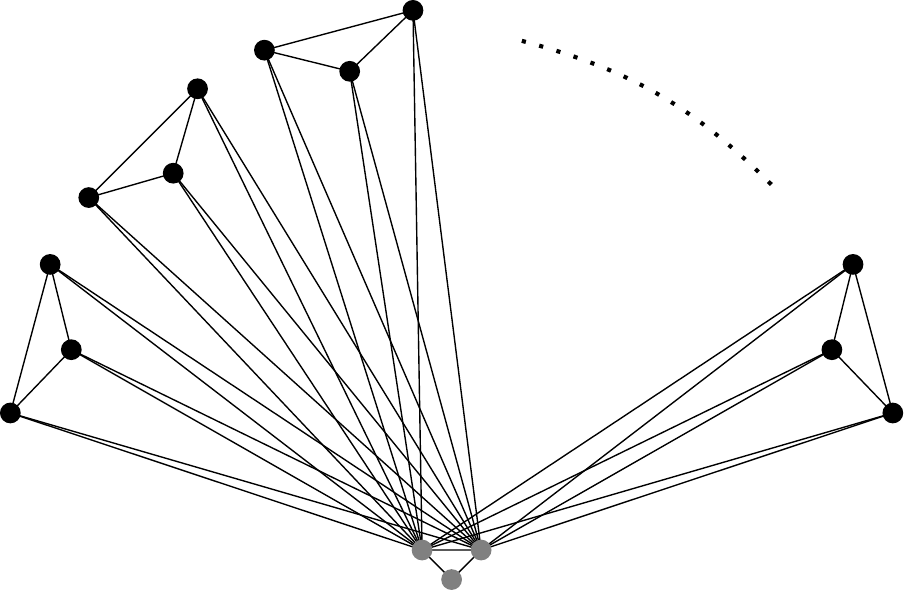}
    \caption{$\Gamma(\mathbb{Z}_q\rtimes_{\alpha}\mathbb{Z}_{p^3})$, gray vertices represents subgroups of orders $q$, $pq$, and $p^2q$}
    \label{fig:ZqxiZp3}
\end{figure}

There are non-nilpotent solvable planar groups of orders 
$$ p^2q, \qquad\quad p^2q^2, \qquad\quad pqr,\qquad \text{ and }\qquad pq $$ which are necessarily $K_{3,3}$-free. Previously, we proved that the groups presented at the second and third items of the following lemma are planar.

\begin{lem}[see {\cite[Lemma~4.6]{KayacanYaraneri2015a}}]\label{lem:p2q}
Let $ G $ be a non-nilpotent group of order $ p^2q $ where $ p $ and $ q $ are distinct prime numbers. Then, $ G $ is $K_{3,3}$-free if and only if it is isomorphic to one of the following groups:
\begin{enumerate}
\item $$\mathbb{Z}_3\rtimes \mathbb{Z}_4,\quad \text{ or } \qquad D_{18},$$
\item $$\mathbb{Z}_q \rtimes _{\alpha}\mathbb{Z}_{p^2}=\langle a, b \mid a^q = b^{p^2} =1, bab^{-1} = a^{\alpha} \rangle$$ where $p^2$ divides $q-1$ and $\alpha$ is any integer not divisible by $q$ whose order in the unit group $\mathbb{Z}_q^*$ of $\mathbb{Z}_q$ is $p^2$, 
\item
$$(\mathbb{Z}_p \times \mathbb{Z}_p) \rtimes _{\beta }\mathbb{Z}_q=\langle a, b, c \mid a^p = b^p = c^q = 1, ab = ba,cac^{-1} = b, cbc^{-1} = a^{-1}b^{\beta} \rangle$$ where $q$ divides $p+1$ and $\beta $ is any integer such that the matrix $ \theta = \left[ \begin{smallmatrix} 0 & -1 \\ 1 & \beta \end{smallmatrix} \right]$ has order $q$ in the group $GL(2,\mathbb{Z}_p)$ and such that $\theta $ has no eigenvalue in $\mathbb{Z}_p.$ 
\end{enumerate}
\end{lem}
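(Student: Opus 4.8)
The plan is to lean on the reduction already carried out in the paper. Since $|G| = p^{2}q$ and $G$ is not nilpotent, one Sylow subgroup is normal and the other is not, so I would split into two principal cases: either the $q$-part $Q\cong\mathbb{Z}_{q}$ is normal (and $G=Q\rtimes P$ with $|P|=p^{2}$), or the $p$-part $P$ is normal (and $G=P\rtimes\mathbb{Z}_{q}$). Each is then subdivided according to whether the order-$p^{2}$ group is $\mathbb{Z}_{p^{2}}$ or $\mathbb{Z}_{p}\times\mathbb{Z}_{p}$, and according to the kernel of the relevant conjugation action (when $P\cong\mathbb{Z}_{p^{2}}$ acts on $\mathbb{Z}_{q}$, resp. when $\mathbb{Z}_{q}$ acts on $\mathbb{Z}_{p}\times\mathbb{Z}_{p}$), where in the last situation the decisive dichotomy is reducibility: whether the $\mathbb{Z}_{q}$-action fixes a line of $P$ or not.

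The main engine is the clique count used throughout the paper. Picking a suitable minimal normal subgroup $N$ (elementary abelian by Remark~\ref{rem4}, hence here of order $p$ or $q$; order $p^{2}$ occurs exactly when $P$ is elementary abelian and acted on irreducibly), the proper non-trivial subgroups of $G$ containing $N$ correspond, via the Correspondence Theorem, to $N$ together with the proper non-trivial subgroups of $G/N$, and by the Product Formula (Remark~\ref{rem1}) any two of them meet, so they span a clique in $\Gamma(G)$. When $N$ has order $p$, $|G/N|=pq$, and whenever the Sylow $q$-part of $G/N$ acts non-trivially on its Sylow $p$-part one has $G/N\cong\mathbb{Z}_{p}\rtimes\mathbb{Z}_{q}$ with $p+1$ subgroups of order $pq$, producing a $K_{p+2}$ in $\Gamma(G)$. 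Since $K_{6}\supseteq K_{3,3}$, this kills every such configuration with $p\geq 4$ in one stroke; the normal-$P\cong\mathbb{Z}_{p^{2}}$ case and the normal-$Q$ case with $P\cong\mathbb{Z}_{p}^{2}$ acting with a kernel of order $p$ yield the same $K_{p+2}$ (for $p=2$ one instead takes $N$ to be the central $\mathbb{Z}_{2}$ and gets $K_{q+2}$, handling $q\geq 4$). So only the choices $p\in\{2,3\}$ with small parameters, and the genuinely irreducible possibilities for $P\cong\mathbb{Z}_{p}\times\mathbb{Z}_{p}$, remain.

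The surviving families are then identified and treated directly. First, $Q\cong\mathbb{Z}_{q}$ normal with $P\cong\mathbb{Z}_{p^{2}}$ acting \emph{faithfully} (which forces $p^{2}\mid q-1$): here $\Gamma(G)$ has a single subgroup $H$ of order $pq$, adjacent to every vertex, while each of the $q$ Sylow $p$-subgroups, their $q$ subgroups of order $p$, and $Q$ itself has degree at most $2$, so $\Gamma(G)$ cannot contain $K_{3,3}$ — this is item~2 (see Figure~\ref{fig:ZqxiZp3}). Second, $P\cong\mathbb{Z}_{p}\times\mathbb{Z}_{p}$ normal with $\mathbb{Z}_{q}$ acting \emph{irreducibly} (which forces $q\mid p+1$): then $G$ has \emph{no} subgroup of order $pq$ at all, because any such subgroup would have its Sylow $q$-subgroup normalising an order-$p$ subgroup of $P$, i.e. an invariant line; hence $\Gamma(G)$ is merely the star joining $P$ to the $p+1$ lines of $P$, plus $p^{2}$ isolated Sylow $q$-subgroups, which is trivially $K_{3,3}$-free — this is item~3. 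Third, the finitely many leftover groups with $p\in\{2,3\}$ and a reducible (or kernel-of-order-$p$) action: one checks each individually, finding that $\mathbb{Z}_{3}\rtimes\mathbb{Z}_{4}$ and $D_{18}$ have intersection graph $K_{5}$ with a handful of pendant vertices attached, hence $K_{3,3}$-free (item~1), whereas the remaining ones — $S_{3}\times\mathbb{Z}_{2}$, $\mathbb{Z}_{3}\times S_{3}$, and the generalized dihedral group $\mathrm{Dih}(\mathbb{Z}_{3}^{2})$ — do contain a $K_{3,3}$ (for $\mathrm{Dih}(\mathbb{Z}_{3}^{2})$, two parallel classes of order-$6$ subgroups over the lines of $\mathbb{Z}_{3}^{2}$ give even a $K_{6}$, via the incidences of the affine plane $AG(2,3)$). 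The converse direction — that the listed groups really are $K_{3,3}$-free — follows for items~2 and~3 from the planarity established in the companion paper, and for item~1 from the explicit graph descriptions just given.

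I expect the bookkeeping in the third bullet to be the main obstacle: one has to enumerate, up to isomorphism, the non-nilpotent groups of order $p^{2}q$ with $p\in\{2,3\}$ — which amounts to classifying the cyclic subgroups of $GL(2,\mathbb{Z}_{p})$ and of $\mathrm{Aut}(\mathbb{Z}_{p^{2}})$ of the admissible orders — and then decide membership on a case-by-case basis. What forces this finer work, rather than a uniform ``clique of size $\geq 6$'' argument everywhere, is precisely that $K_{3,3}$-freeness is strictly weaker than $K_{6}$-freeness, so a $K_{5}$ is permitted and one must genuinely separate, for instance, $\mathbb{Z}_{3}\rtimes\mathbb{Z}_{4}$ and $D_{18}$ from $S_{3}\times\mathbb{Z}_{2}$ and $\mathbb{Z}_{3}\times S_{3}$.
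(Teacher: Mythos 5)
Your proposal is sound and reaches the correct classification, but it is organized rather differently from the paper's proof. The paper cases on the order of a minimal normal subgroup $N\in\{p,q,p^2\}$ and eliminates the bad configurations abstractly, via normalizer and conjugacy arguments (e.g.\ in Case I(b) it produces the killing clique from the conjugates of $\mathrm{N}_G(Q)$ or of $NQ$, using Remark~\ref{rem3} and the existence of a second $Q$-invariant line when $P\cong\mathbb{Z}_p\times\mathbb{Z}_p$); it never names the exceptional groups $S_3\times\mathbb{Z}_2$, $\mathbb{Z}_3\times S_3$, $\mathrm{Dih}(\mathbb{Z}_3^2)$. You instead case on which Sylow subgroup is normal and on the kernel/reducibility of the conjugation action, push everything with $p,q$ not tiny into a $K_{p+2}$ or $K_{q+2}$ via the Correspondence Theorem plus the product formula, and finish with an explicit check of the five surviving small groups. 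Both routes rest on the same engine (correspondence + product-formula cliques), so neither is ``more elementary''; yours is more transparent about exactly which groups must be inspected by hand, while the paper's normalizer arguments avoid having to enumerate the non-nilpotent groups of orders $12$ and $18$. Your structural descriptions of the graphs for items~2 and~3 (a universal vertex of order $pq$ plus degree-$\le 2$ vertices; a star plus isolated Sylow $q$-subgroups) and your $K_{3,3}$'s inside $D_{12}$, $\mathbb{Z}_3\times S_3$ and $\mathrm{Dih}(\mathbb{Z}_3^2)$ all check out.

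Three small points to tidy. First, your opening dichotomy assumes that a non-nilpotent group of order $p^2q$ has exactly one normal Sylow subgroup; this is true but needs the one-line Sylow count (if $n_p>1$ and $n_q>1$ then $p\mid q-1$ and $q\mid p+1$, forcing $|G|=12$, where counting elements of order $3$ gives $n_2=1$). Second, in the reduction paragraph the clique sizes are governed by the order of the \emph{non-normal} Sylow direction: when $Q$ is normal and $N$ is the order-$p$ kernel of the action, $G/N\cong\mathbb{Z}_q\rtimes\mathbb{Z}_p$ has $q+1$ proper non-trivial subgroups and you get $K_{q+2}$, not $K_{p+2}$ — your parenthetical applies this only for $p=2$, but it is also what eliminates, say, $\mathbb{Z}_7\rtimes(\mathbb{Z}_3\times\mathbb{Z}_3)$ and $\mathbb{Z}_7\rtimes\mathbb{Z}_9$ with kernel of order $3$, where $K_{p+2}=K_5$ would not suffice. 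Third, ``$p+1$ subgroups of order $pq$'' should read ``$p+1$ proper non-trivial subgroups of $G/N$'' (one of order $p$ and $p$ of order $q$), whose preimages together with $N$ give the $K_{p+2}$. None of these affects the correctness of the argument.
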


\begin{proof}
Suppose that $G$ is $K_{3,3}$-free. There are three possible cases for the order of the minimal normal subgroup $N$ of $G$.

\emph{Case I:} $|N|=p$. Let $P$ be the Sylow $p$-subgroup of $G$, and $Q$ be a Sylow $q$-subgroup of $G$.

\emph{Case I (a):} $P$ is not a normal subgroup of $G$. Then $N$ is contained in every Sylow $p$-subgroups as well as in some subgroups of order $pq$. However, there are $q=1+kp$ conjugates of $P$ and since $G$ is $K_6$-free, we have $p=2$, $q=3$ and $H:=NQ$ must be a normal subgroup of $G$. Notice that, the three Sylow $p$-subgroups together with $N$ and $H$ form a $K_5$ in $\Gamma(G)$. Moreover, $Q$ is a normal subgroup of $G$; otherwise, $H\cong\mathbb{Z}_p\rtimes\mathbb{Z}_q$, however $3\nmid 2-1$. If $P\cong\mathbb{Z}_2\times\mathbb{Z}_2$, there would be a non-normal subgroup $K\cong\mathbb{Z}_2$, as otherwise, $P$ would be a normal subgroup of $G$. Then $QK$ is connected by an edge with two of the three Sylow $p$-subgroups as well as with $H$ which is a contradiction because we assumed that $G$ is $K_{3,3}$-free. Therefore, $P\cong\mathbb{Z}_4$ and we can easily observe that $$\mathbb{Z}_3\rtimes \mathbb{Z}_4=\langle a,b\mid a^3=b^4=1, bab^{-1}=a^2\rangle$$ is $K_{3,3}$-free as it has exactly six proper non-trivial subgroups and the minimal subgroup of order $3$ has degree one in the intersection graph. 

\emph{Case I (b):} $P$ is the normal Sylow $p$-subgroup of $G$. As $G$ is not a nilpotent group by assumption, $Q$ is not a normal subgroup of $G$. 

Suppose that there is a normal subgroup $L$ of $G$ of order $pq$ containing $Q$. Then $L$ contains all conjugates of $Q$, hence $L\cong\mathbb{Z}_p\rtimes\mathbb{Z}_q$ and in particular $q\bigm{|}p-1$. Moreover, by Remark~\ref{rem3} any subgroup containing $\mathrm{N}_G(Q)$ is self-normalizing and this implies $\mathrm{N}_G(Q)\neq Q$, as $L\triangleleft G$ by assumption. However, $\mathrm{N}_G(Q)\neq G$ either, thus $H:=\mathrm{N}_G(Q)$ is of order $pq$ and it is not a normal subgroup of $G$. Let $K$ be the subgroup of $H$ of order $p$. Notice that since $p>q$, we have $K\triangleleft H$. Clearly, conjugates of $H$ together with $K$ and $P$ form a $K_{p+2}$ in $\Gamma(G)$. Therefore $p=3$ and $q=2$. However, any (Sylow) $q$-subgroup is contained by the normal subgroup $L$ implying there is an edge between $L$ and any conjugate of $H$. That is, conjugates of $H$ together with $K$, $P$, and $L$ span a subgraph containing $K_{3,3}$.

Now suppose that there is no normal subgroup of order $pq$. In particular $NQ$ is not a normal subgroup of $G$. As in the previous paragraph, conjugates of $NQ$ together with $N$ and $P$ form a $K_{p+2}$ in $\Gamma(G)$. Therefore $p=3$ and $q=2$, as the number of Sylow $q$-subgroups is $ \equiv 1 \pmod{q}$. (Since any subgroup of index $2$ must be normal, $p\neq 2$.) If $P\cong\mathbb{Z}_3\times\mathbb{Z}_3$, there must be a normal subgroup $K$ of order $p$ different from $N$. To see this, consider the action of $Q$ by conjugation on the set of subgroups of order $p$. (Notice that there are totally four subgroups of order $p$.) Since $N$ is a normal subgroup of order $p$ and the length of an orbit of $Q$ is either $1$ or $2$, there must be a subgroup $K$ fixed by $Q$ and different from $N$. However, $G$ is generated by the elements of $N$, $K$, and $Q$, thus $K$ is a normal subgroup. Then $KQ$ is a group of order $pq$ different from $NQ$ and its conjugates. This is because, $NQ$ and $KQ$ have unique subgroups of order $p$ which are not conjugate to each other. By the product formula any two subgroups of order $pq$ intersects non-trivially. Therefore, conjugates of $NQ$ together with the conjugates of $KQ$ form a $K_6$ in $\Gamma(G)$. Finally, if $P\cong\mathbb{Z}_9$, we have the dihedral group $$ D_{18}=\langle a,b \mid a^9=b^2=1, bab=a^{-1}\rangle $$ which is $K_{3,3}$-free. See Figure~\ref{fig:D18}.

\emph{Case II:} $|N|=q$. As the Sylow $q$-subgroup $N$ is normal and as $G$ is not a nilpotent group, there are at least three Sylow $p$-subgroups, say $P_i$ ($1\leq i\leq q$). 

Suppose that $G/N\cong \mathbb{Z}_p\times\mathbb{Z}_p$. By the Correspondence Theorem there are at least three subgroups $H_j$ ($1\leq j\leq p+1$) of order $pq$ each containing $N$. By the product formula, $P_i\cap H_j\neq 1$ for any $1\leq i,j\leq 3$ and we have six vertices which span a subgraph of $\Gamma(G)$ containing a $K_{3,3}$, contradiction!

Now suppose that $G/N\cong \mathbb{Z}_{p^2}$. If $X=P_i\cap P_j$ is non-trivial for some distinct Sylow $p$-subgroups, then $X$ must be a normal subgroup of $G$ as $\mathrm{N}_G(X)$ contains both $P_i$ and $P_j$. However, this case was considered in Case I (a). If the intersection of any pair of Sylow $p$-subgroups is trivial, then $G$ has a presentation $$\mathbb{Z}_q \rtimes_{\alpha}\mathbb{Z}_{p^2}=\langle a, b \mid a^q = b^{p^2} =1, bab^{-1} = a^{\alpha} \rangle$$ and it is planar. See \cite[Lemma~4.6]{KayacanYaraneri2015a} for details.

\emph{Case III:} $|N|=p^2$. As the Sylow $p$-subgroup $N$ is normal and as $G$ is not a nilpotent group, any subgroup of order $q$ is not normal in $G$. We want to observe that there are no subgroups of $G$ of order $pq$. To see this, first suppose that there is a subgroup $H$ of $G$ of order $pq$. If $H$ is a normal subgroup of $G$, obviously $H$ contains all (Sylow) $q$-subgroups. Then $A=H\cap N$ is normal in $H$ as well as in $G$, since $N$ is abelian and $\langle H,N\rangle=G$. However, this is in contradiction with the assumption that there is no normal subgroup of order $p$. If $H$ is not a normal subgroup of $G$, since a subgroup of smallest prime index must be normal, we have $p>q$. Then, again $A$ is a normal subgroup of $H$ and of $G$ and we have the same contradiction. Therefore, there is no subgroup of $G$ of order $pq$. In that case, $G$ has a presentation $$(\mathbb{Z}_p \times \mathbb{Z}_p) \rtimes _{\beta }\mathbb{Z}_q=\langle a, b, c \mid a^p = b^p = c^q = 1, ab = ba,cac^{-1} = b, cbc^{-1} = a^{-1}b^{\beta} \rangle$$ and it is planar. See \cite[Lemma~4.6]{KayacanYaraneri2015a} for details.
\end{proof}

\begin{figure}[!ht]
  \centering
    \includegraphics[width=0.5\textwidth]{./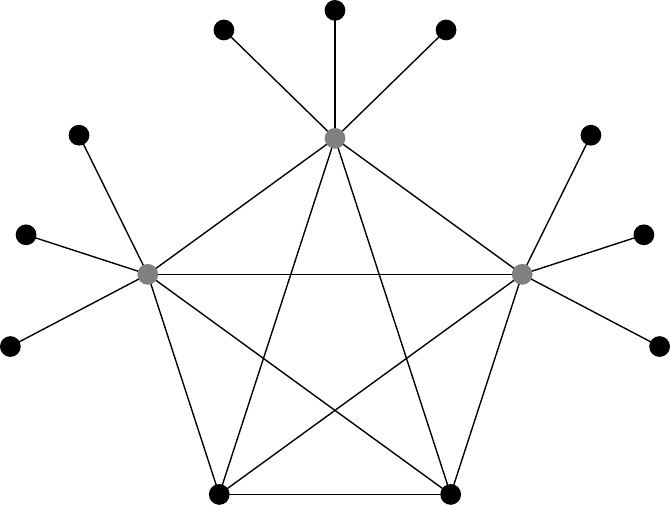}
    \caption{$\Gamma(D_{18})$, gray colored vertices represents subgroups of order $pq$}
    \label{fig:D18}
\end{figure}

\begin{lem}[see {\cite[Lemma~4.7]{KayacanYaraneri2015a}}]\label{lem:p2q2}
  Let $ G $ be a non-nilpotent group of order $ p^2q^2 $ where $p>q$ are distinct prime numbers. Then, $ G $ is $K_{3,3}$-free if and only if it is isomorphic to $$(\mathbb{Z}_p \times \mathbb{Z}_p) \rtimes _{\beta }\mathbb{Z}_{q^2}=\langle a, b, c \mid a^p = b^p = c^{q^2} = 1, ab = ba,cac^{-1} = b, cbc^{-1} = a^{-1}b^{\beta} \rangle$$ where $q^2$ divides $p+1$ and $\beta $ is any integer such that the matrix $ \theta = \left[ \begin{smallmatrix} 0 & -1 \\ 1 & \beta \end{smallmatrix} \right]$ has order $q^2$ in the group $GL(2,\mathbb{Z}_p)$ and such that $\theta ^q$ has no eigenvalue in $\mathbb{Z}_p.$
\end{lem}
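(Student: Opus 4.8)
The plan is to prove the two implications separately. The ``if'' direction is immediate: the group $(\mathbb{Z}_p\times\mathbb{Z}_p)\rtimes_\beta\mathbb{Z}_{q^2}$ subject to the stated conditions on $\beta$ is precisely the group of order $p^2q^2$ shown to have a planar intersection graph in \cite[Lemma~4.7]{KayacanYaraneri2015a}, and a planar graph is in particular $K_{3,3}$-free by Kuratowski's theorem. So the work lies in showing that a $K_{3,3}$-free non-nilpotent group $G$ of order $p^2q^2$ with $p>q$ must be this group. For this I would carry out the usual case analysis on a minimal normal subgroup $N$ of $G$: by Remark~\ref{rem4} $N$ is elementary abelian, and by Lemma~\ref{lem:abelian} it has rank $1$ or $2$, so $|N|\in\{q,p,q^2,p^2\}$, with $G/N$ cyclic of prime or prime-squared order whenever $N$ has rank $2$. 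The recurring device is that the subgroups of $G$ containing $N$ induce a clique in $\Gamma(G)$; $K_{3,3}$-freeness (hence $K_6$-freeness) forces this clique to have at most five vertices, i.e.\ $G/N$ has at most six subgroups in all. The hypothesis $p>q$ enters through Sylow's theorem: the Sylow $p$-subgroup $P$ of $G$ is normal unless $(p,q)=(3,2)$.

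I would first dispose of $|N|=q$. Then $G/N$ has order $p^2q$ with at most six subgroups, and since (as already argued above for $p^2qr$) any non-cyclic group of order $p^2q$ has more than six subgroups, $G/N\cong\mathbb{Z}_{p^2q}$; the five subgroups of $G$ containing $N$ then have orders $q,\,pq,\,p^2q,\,q^2,\,pq^2$, and taking as the two colour classes the members of orders $pq,\,p^2q,\,pq^2$ and the triple $\{N,\,P,\,R\}$ — where $R$ is an arbitrary Sylow $q$-subgroup (which contains $N$, since $N$ is a normal $q$-group) and $P$ is a Sylow $p$-subgroup chosen to contain the Sylow $p$-subgroup of the order-$pq$ member — one verifies that all nine cross-edges are present, a $K_{3,3}$; contradiction. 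The case $|N|=p$ is handled by the same argument with the roles of the two primes interchanged ($G/N\cong\mathbb{Z}_{pq^2}$). Next, if $|N|=q^2$ then $N$ is the normal Sylow $q$-subgroup $\cong\mathbb{Z}_q\times\mathbb{Z}_q$, $G/N\cong\mathbb{Z}_{p^2}$, and $G=N\rtimes\mathbb{Z}_{p^2}$ with a nontrivial action; this forces $p\mid|GL(2,\mathbb{Z}_q)|=q(q-1)^2(q+1)$, hence (using $p>q$) $p\mid q+1$, hence $p=q+1$, i.e.\ $(p,q)=(3,2)$ and $|G|=36$. For this one group $G=(\mathbb{Z}_2\times\mathbb{Z}_2)\rtimes\mathbb{Z}_9$, the kernel of the action of $\mathbb{Z}_9$ on $\mathbb{Z}_2\times\mathbb{Z}_2$ is a central subgroup of order $3$ contained in all four Sylow $3$-subgroups and in one further subgroup of order $12$, so $\Gamma(G)$ contains a $K_6$; contradiction.

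The surviving case is $|N|=p^2$, which produces the listed group. Here $P:=N\cong\mathbb{Z}_p\times\mathbb{Z}_p$ is the normal Sylow $p$-subgroup, $G/P\cong\mathbb{Z}_{q^2}$, and since $P\cap Q=1$ and $|P||Q|=|G|$ for a Sylow $q$-subgroup $Q$, we get $G=P\rtimes Q$ with $Q\cong\mathbb{Z}_{q^2}$ acting through some $\Theta\in GL(2,\mathbb{Z}_p)$. As $G$ has no normal subgroup of order $p$ (such a subgroup would be a minimal normal subgroup, falling under the already-excluded case $|N|=p$), no line of $P$ is $Q$-invariant, i.e.\ $\Theta$ has no eigenvalue in $\mathbb{Z}_p$. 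If $\Theta$ had order $q$, then the order-$q$ subgroup $Q_1\le Q$ would centralize $P$, giving $PQ_1\cong\mathbb{Z}_p\times\mathbb{Z}_p\times\mathbb{Z}_q$; the $p+1$ subgroups of order $pq$ inside $PQ_1$, together with $Q_1$ and $PQ_1$, all contain $Q_1$ and hence span a $K_{p+3}$, which is a $K_6$ since $p\ge 3$; contradiction. Hence $\Theta$ has order $q^2$, $Q$ acts faithfully, and $Q_1$ acts as $\Theta^q$, an element of order $q$. If $\Theta^q$ had an eigenvalue in $\mathbb{Z}_p$, then $Q_1$ would normalize a line $L\le P$; as $L$ is not normal in $G$ one gets $\mathrm{N}_G(L)=PQ_1$, so $L$ has exactly $q$ conjugates, each normalized by the normal subgroup $PQ_1$, and combining the order-$pq$ subgroups $L^{(i)}Q_1$, the subgroup $PQ_1$, the Sylow $p$-subgroup $P$, and a well-chosen Sylow $q$-subgroup of $G$ one again exhibits a $K_{3,3}$; contradiction. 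So $\Theta$ has order $q^2$ and $\Theta^q$ has no eigenvalue in $\mathbb{Z}_p$; viewing $\Theta$ as multiplication by an element of order $q^2$ in $\mathbb{F}_{p^2}^*$, these two conditions force $q^2\mid p+1$ and pin $G$ down to $(\mathbb{Z}_p\times\mathbb{Z}_p)\rtimes_\beta\mathbb{Z}_{q^2}$ as in the statement, and by \cite[Lemma~4.7]{KayacanYaraneri2015a} that group indeed has these properties.

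The main obstacle, as in several of the earlier cases, is the last point: ruling out the possibility that $\Theta^q$ fixes a line. When $q\in\{2,3\}$ the cliques that present themselves have only $q+2\le 5$ vertices, so — unlike in the $K_6$-free arguments used elsewhere — one is forced to exhibit a genuine $K_{3,3}$, i.e.\ six vertices that need not be pairwise adjacent, and identifying which Sylow $q$-subgroup makes the nine required edges appear is the delicate part. A lesser, purely finite, nuisance is the verification for $|G|=36$ and the edge-bookkeeping in the $|N|\in\{p,q\}$ cases, which are routine but should be carried out with care.
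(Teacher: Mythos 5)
Your overall architecture --- case analysis on a minimal normal subgroup $N$, reduction to $N$ being the normal Sylow $p$-subgroup of rank $2$ with $G/N\cong\mathbb{Z}_{q^2}$, then pinning down the action of a generator of $Q$ as a matrix $\Theta$ --- is the same as the paper's, and your handling of $|N|\in\{q,q^2\}$ is sound (your explicit treatment of the order-$36$ group $(\mathbb{Z}_2\times\mathbb{Z}_2)\rtimes\mathbb{Z}_9$ is in fact more careful than the paper's one-line dismissal of a normal Sylow $q$-subgroup). But the proof is not complete: the step you yourself flag as ``the delicate part'' --- showing that $\Theta^q$ has no eigenvalue in $\mathbb{Z}_p$ --- is a genuine gap, and the construction you sketch for it does not work as stated. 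If $\Theta^q$ fixes a line $L$ with $\mathrm{N}_G(L)=PQ_1$, the orbit of $L$ has only $q$ elements, so the subgroups $L^{(i)}Q_1$, $Q_1$, $PQ_1$ form a clique of size $q+2\le 5$ when $q\in\{2,3\}$, and no sixth vertex is supplied: a Sylow $q$-subgroup $Q'$ meets an order-$pq$ subgroup in a subgroup of order at least $q/p<1$ by the product formula, so its adjacency to the $L^{(i)}Q_1$ is not automatic, and you never identify a choice of $Q'$ for which the nine edges of a $K_{3,3}$ actually appear. Since this is precisely the step that forces the condition ``$\theta^q$ has no eigenvalue'' in the statement, it cannot be left as an acknowledged difficulty.

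There are two clean ways to close the gap. The paper's route: $K:=PQ_1$ is a subgroup of order $p^2q$ which must itself be $K_{3,3}$-free; since $p>q$ forces $p\ge 3$, $K$ cannot be $\mathbb{Z}_p\times\mathbb{Z}_p\times\mathbb{Z}_q$ (Lemma~\ref{lem:abelian}, Case II(e)) nor either of the first two groups of Lemma~\ref{lem:p2q} (wrong Sylow structure), so $K\cong(\mathbb{Z}_p\times\mathbb{Z}_p)\rtimes_{\beta}\mathbb{Z}_q$, which has no subgroup of order $pq$ --- i.e.\ $\Theta^q$ fixes no line; this also yields directly that $G$ has no subgroups of order $pq$ or $pq^2$, which you need anyway to match the presentation. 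A direct route that rescues your argument: $\Theta$ commutes with $\Theta^q$ and therefore preserves each eigenspace of $\Theta^q$; if $\Theta^q$ had two distinct eigenvalues in $\mathbb{Z}_p$, its eigenlines would be $\Theta$-invariant, contradicting the already-established fact that $\Theta$ has no eigenvalue. Hence if $\Theta^q$ has an eigenvalue it must be scalar, so all $p+1$ lines of $P$ are $Q_1$-invariant, and $Q_1$, $PQ_1$ and the $p+1$ subgroups $LQ_1$ give a $K_{p+3}\supseteq K_6$. One further small repair: in your $|N|=p$ case the edge between a Sylow $q$-subgroup $Q$ and the order-$pq$ member $A$ does not follow from the product formula (which only gives $|Q\cap A|\ge q/p<1$); you need the paper's observation that $A=NX$ for every subgroup $X$ of order $q$, so that $A$ contains the unique order-$q$ subgroup of $Q$.
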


\begin{proof} 
Suppose that $ G $ is $K_{3,3}$-free. First we shall observe that the minimal normal subgroup $N$ of $G$ must be a Sylow subgroup. To this end, let us assume $|N|=p$. Then, $G/N\cong \mathbb{Z}_{pq^2}$ and there exists a unique Sylow $p$-subgroup $P$ containing $N$. Since Sylow $p$-subgroups are conjugate and since $N$ is a normal subgroup, $P$ is also a normal subgroup of $G$. Let $Q$ be a Sylow $q$-subgroup. By assumption $G$ is not nilpotent, hence $Q$ is not a normal subgroup of $G$. (Notice that assuming $|N|=q$, one may deduce in a similar fashion that the unique Sylow $q$-subgroup is a normal subgroup of $G$. However, this is not possible for $p>q$.) 

Let $A$, $B$, $C$ be the subgroups of respective orders $pq$, $pq^2$, $p^2q$ containing $N$ and $Q_i$ $(1\leq i\leq 3)$ be three Sylow $q$-subgroups. Let $X$ be a group of order $q$. Since $N$ is a normal subgroup, $NX$ is a group of order $pq$  containing $X$. This implies $A$ contains any group of order $q$, as it is the unique subgroup of order $pq$ containing $N$. That is $A\cap Q_i$ is non-trivial for $1\leq i\leq 3$. This is also true for $B$ and $C$, and so $A,B,C$ together with  $Q_i$ span a subgraph containing a $K_{3,3}$ in $\Gamma(G)$. 

By the preceding discussion we conclude that $N$ is the normal Sylow $p$-subgroup of $G$ and since it is minimal, $N\cong \mathbb{Z}_p\times\mathbb{Z}_p$ by Remark~\ref{rem4}. We know that if the rank of the minimal normal subgroup $N$ is two, then $G/N$ has at most three subgroups, hence it is a cyclic group of prime or prime squared order. Since the order of $G$ is $p^2q^2$, we conclude $Q\cong\mathbb{Z}_{q^2}$ where $Q$ is a Sylow $q$-subgroup. Let $K$ be the unique subgroup of $G$ of order $p^2q$ containing $N$. Since any subgroup of order $p^2q$ contains $N$, we see that $K$ is the unique subgroup of $G$ of order $p^2q$. Moreover, since $NX=K$ for any subgroup $X$ of order $q$, $K$ contains all subgroups of $G$ of order $q$. Also, since any subgroup of a $K_{3,3}$-free group is also $K_{3,3}$-free, $K$ is isomorphic to the third group stated in the previous Lemma~\ref{lem:p2q}. In particular, there is no subgroup of $K$ of order $pq$ and in turn this implies there are no subgroups of $G$ of order $pq^2$ or $pq$. To see this observe that if $H<G$ is of order $pq$, then $H\cap K$ contains a subgroup of order $p$ by the product formula. However, $K$ contains every subgroup of order $q$ which implies $H<K$, contradiction! Similar argument works when $|H|=pq^2$. Hence $G$ is a group with a normal Sylow $p$-subgroup isomorphic to $\mathbb{Z}_p\times\mathbb{Z}_p$ and a non-normal Sylow $q$-subgroup isomorphic to $\mathbb{Z}_{q^2}$ and there are no subgroups of $G$ of order $pq$ or of order $pq^2$. Such a group has a presentation $$(\mathbb{Z}_p \times \mathbb{Z}_p) \rtimes _{\beta }\mathbb{Z}_{q^2}=\langle a, b, c \mid a^p = b^p = c^{q^2} = 1, ab = ba,cac^{-1} = b, cbc^{-1} = a^{-1}b^{\beta} \rangle$$ and it is planar. See \cite[Lemma~4.7]{KayacanYaraneri2015a} for details.
\end{proof}

\begin{lem}[see {\cite[Lemma~4.8]{KayacanYaraneri2015a}}]\label{lem:pqr}
Let $ G $ be a non-nilpotent group of order $ pqr $ where $p<q<r$ are distinct prime numbers. Then, $ G $ is $K_{3,3}$-free if and only if it is isomorphic to  $$\mathbb{Z}_r \rtimes _{\alpha }\mathbb{Z}_{pq}=\langle a, b \mid a^r = b^{pq} =1, bab^{-1} = a^{\alpha} \rangle$$ where $pq$ divides $r-1$ and
  $\alpha$ is any integer not divisible by $r$ whose order in the unit group
  $\mathbb{Z}_r^*$ of $\mathbb{Z}_r$ is $pq$.
\end{lem}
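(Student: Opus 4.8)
The plan is to determine the structure of $G$ by Sylow theory and then treat the two implications in turn. Write $|G|=pqr$ with $p<q<r$. A short counting argument shows that the Sylow $r$-subgroup $R$ is normal: if $n_r>1$ then $n_r\mid pq$ together with $n_r\equiv 1\pmod r$ forces $n_r=pq$ (as $p,q<r$), and then the $pq(r-1)$ elements of order $r$ together with the $\geq r(q-1)$ elements of order $q$ (once one checks the Sylow $q$-subgroup is not normal) would exceed $|G|$, using $r\geq q+2$; alternatively one invokes the classical fact that a group of squarefree order has a normal Sylow subgroup for its largest prime divisor. Thus $G/R$ has order $pq$, and, since every subgroup of order $pr$ or $qr$ contains the unique Sylow $r$-subgroup $R$ and hence corresponds to a subgroup of $G/R$, the group $G$ possesses a subgroup $B$ of order $qr$ containing $R$ (and, when $G/R$ is abelian, unique subgroups $A,B$ of orders $pr,qr$).

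Next I would rule out $G/R$ non-abelian. If $G/R\cong\mathbb{Z}_q\rtimes\mathbb{Z}_p$, let $B$ be the preimage of the normal subgroup of order $q$ and let $X_1,\dots,X_q$ be the preimages of the $q$ Sylow $p$-subgroups of $G/R$; these $X_i$ have order $pr$ and all contain $R$. Then $R$ and $B$ are adjacent to every $X_i$, and for any Hall $\{p,q\}$-subgroup $T$ (which exists since $G$ is solvable) the product formula gives $|T\cap X_i|\geq p>1$, so $T$ is adjacent to every $X_i$ as well. As $q\geq 3$, the sets $\{R,B,T\}$ and $\{X_1,X_2,X_3\}$ span a $K_{3,3}$ in $\Gamma(G)$, a contradiction. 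Hence $G/R\cong\mathbb{Z}_{pq}$, and we may write $G=R\rtimes H$ with $R=\langle a\rangle$, $H=\langle b\rangle\cong\mathbb{Z}_{pq}$, $bab^{-1}=a^{\alpha}$; let $d$ be the order of $\alpha$ in $\mathbb{Z}_r^{*}$, so $d\mid pq$.

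If $d=1$ the action is trivial and $G$ is nilpotent, contrary to hypothesis. If $d=p$, then $\langle b^p\rangle$ centralizes $R$, so $B\cong\mathbb{Z}_{qr}$ and its characteristic Sylow $q$-subgroup $\langle b^p\rangle$ is the unique, hence normal, Sylow $q$-subgroup $Q$ of $G$; since $\alpha\neq1$ we have $\mathrm{N}_G(H)=H$, so there are $r$ conjugates of $H$, and $Q$ lies in $B$ and in every conjugate of $H$. As $r\geq 5$, the subgroups $Q$, $B$ and the $r$ conjugates of $H$ pairwise intersect nontrivially, forming a clique on at least $7$ vertices, which contains $K_{3,3}$. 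The case $d=q$ is symmetric, with $\langle b^q\rangle$ in the role of the unique Sylow $p$-subgroup and $A$ in the role of $B$. Hence $d=pq$, which forces $pq\mid r-1$ and identifies $G$ with $\mathbb{Z}_r\rtimes_\alpha\mathbb{Z}_{pq}$ as claimed.

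For the converse one analyses $G=\mathbb{Z}_r\rtimes_\alpha\mathbb{Z}_{pq}$ with $\alpha$ of order $pq$. First I would show its subgroups are exactly $R$, $A$, $B$, and $r$ subgroups of each of the orders $p$, $q$, $pq$, where $n_p=n_q=n_{pq}=r$ because the normalizer of each such subgroup is $H$ up to conjugacy; the key point — and the step I expect to be the main obstacle — is that distinct Hall $\{p,q\}$-subgroups intersect trivially and each Sylow $p$- (resp. $q$-)subgroup lies in exactly one of them. This follows either from $\mathrm{N}_G(H)=H$ together with the normality of the Sylow subgroups inside the cyclic group $H$, or by a direct count: every element of $G$ has order in $\{1,p,q,pq,r\}$ (there is no cyclic subgroup of order $pr$, $qr$ or $pqr$), so the $r$ Hall subgroups cover the $r(pq-1)+1$ elements of $\{p,q\}$-power order, which is possible only if they pairwise meet trivially. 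Granting this, a routine computation of adjacencies in $\Gamma(G)$ shows that every vertex other than $A$, $B$ and the Hall $\{p,q\}$-subgroups has degree $2$, that the Hall subgroups form an independent set, and that any two distinct Hall subgroups have only $A$ and $B$ as common neighbours. Since each vertex of a $K_{3,3}$ has degree at least $3$ inside it, a copy of $K_{3,3}$ would use only $A$, $B$ and Hall subgroups; then one part of the bipartition contains two Hall subgroups, forcing the other part into the two-element set $\{A,B\}$ — impossible. Therefore $\Gamma(G)$ is $K_{3,3}$-free, which completes the proof.
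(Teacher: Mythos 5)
Your proof is correct, and while it follows the same overall skeleton as the paper (normalize the Sylow $r$-subgroup $R$, split on whether $G/R$ is abelian, then hunt for large cliques or bicliques), it diverges at two points in ways worth noting. For non-abelian $G/R$ the paper first invokes the general bound that $G/R$ has at most six subgroups to force $G/R\cong S_3$, and only then builds a $K_{r+1}$ or a $K_{3,3}$ from the Hall $\{2,3\}$-subgroups; your argument with $\{R,B,T\}$ against three preimages $X_i$ of Sylow $p$-subgroups of $G/R$ handles all non-abelian quotients uniformly and is shorter. For cyclic $G/R$ the paper argues via the intersections of conjugates of a subgroup of order $pq$ (a nontrivial intersection is normal, lies in all $r$ conjugates, and yields a $K_{r+1}$), whereas you parametrize by the order $d$ of $\alpha$ and exhibit the clique $\{Q,B,H^{g_1},\dots,H^{g_r}\}$ when $d=p$ (and symmetrically for $d=q$); these are two descriptions of the same obstruction, so the content is equivalent. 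The most substantive difference is the converse: the paper simply cites the planarity classification for the fact that $\mathbb{Z}_r\rtimes_\alpha\mathbb{Z}_{pq}$ is $K_{3,3}$-free, while you give a self-contained argument via the full subgroup lattice (the degree-$\le 2$ vertices cannot lie in a $K_{3,3}$, and two Hall $\{p,q\}$-subgroups have only $A$ and $B$ as common neighbours), which makes your write-up independent of the earlier paper. All the steps check out; only the normality of $R$ is stated somewhat loosely, but it is a standard Sylow-counting fact that the paper also leaves to the reader.
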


\begin{proof}
Let $R$ be a Sylow $r$-subgroup. Applying Sylow theorems it can be easily observed that $R\triangleleft G$. Since $|G/R|=pq$ and $p<q$, we see that $G/R$ is either abelian (hence cyclic) or isomorphic to $\mathbb{Z}_q\rtimes\mathbb{Z}_p$. In the latter case the number of subgroups of $G/R$ is $q+3$. Since this number must be at most six, $G$ is isomorphic to the symmetric group $S_3\cong\mathbb{Z}_3\rtimes\mathbb{Z}_2$ in the latter case. 

Suppose that $G/R\cong S_3$. By the Correspondence Theorem, there is a unique
subgroup $N$ of order $3r$ and three subgroups $K_i$ ($1\leq i\leq 3$) of order $2r$ containing
$R$. Since $N$ is a Hall $\{3,r\}$-subgroup and $R$ is normal, $N$ is a normal
subgroup of $G$ as well (see Remark~\ref{rem5}). Let $Q$ be the Sylow $3$-subgroup of $G$ contained by
$N$. Then, by Remark~\ref{rem3}, $Q$ is a normal subgroup of $G$. Let $H$ be
a Hall $\{2,3\}$-subgroup of $G$. If $H$ is not a normal subgroup of $G$, then
the number of its conjugates is $|G:H|=r$ and those subgroups together with $Q$
form a $K_{r+1}$ in $\Gamma(G)$. Since $r\geq 5$, the intersection graph cannot
be $K_{3,3}$-free in this case. Also, if $H\triangleleft G$ then it is easy to observe that the subgroups $H$, $N$, $R$ together with $K_i$ ($1\leq i\leq 3$) form a subgraph containing $K_{3,3}$ in the intersection graph.

Suppose that $G/R\cong \mathbb{Z}_{pq}$. By the Correspondence Theorem, there
are unique subgroups $N$ of order $pr$ and $M$ of order $qr$. As in the
preceding paragraph both $M$ and $N$ are normal subgroups. Let $K$ be a
subgroup of order $pq$. Clearly, $K$ is not a normal subgroup of $G$ and in
particular it has $r$ conjugates. Now assume that there exist two distinct
conjugates $K_1$ and $K_2$ of $K$ such that their intersection $X=K_1\cap K_2$
is non-trivial. Then, as $|X|$ is either $p$ or $q$, we have $X\triangleleft
G$; and this implies $X$ is contained by all conjugates of $K$. That is,
conjugates of $K$ together with $X$ form a $K_{r+1}$ in the intersection graph
which is a contradiction as $r\geq 5$. Therefore, any two distinct subgroup of order $pq$ intersects trivially. Such a group has a presentation $$\mathbb{Z}_r \rtimes _{\alpha }\mathbb{Z}_{pq}=\langle a, b \mid a^r = b^{pq} =1, bab^{-1} = a^{\alpha} \rangle$$ and it is planar. See \cite[Lemma~4.8]{KayacanYaraneri2015a} for details.
\end{proof}

Finally, intersection graph of any group of order $pq$ consists of isolated vertices and so $K_{3,3}$-free. For further references we state it as a lemma.

\begin{lem}\label{lem:pq}
Let $G$ be a non-nilpotent group of order $pq$ where $p$ and $q$ are distinct prime numbers and $p>q$. Then, $q\bigm{|} p-1$ and $$G\cong \mathbb{Z}_p\rtimes\mathbb{Z}_q$$ is $K_{3,3}$-free. 
\end{lem}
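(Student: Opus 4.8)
The plan is to pin down the subgroup structure of $G$ via elementary Sylow counting and then observe that the intersection graph has \emph{no} edges at all, which makes $K_{3,3}$-freeness immediate.

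First I would locate the Sylow subgroups. Let $P$ be a Sylow $p$-subgroup and $Q$ a Sylow $q$-subgroup, so $|P|=p$ and $|Q|=q$. By Sylow's theorems the number $n_p$ of Sylow $p$-subgroups divides $q$ and is congruent to $1 \pmod p$; since $q<p$ this forces $n_p=1$, so $P\triangleleft G$. Because $G$ is non-nilpotent it is not the direct product of its Sylow subgroups, so $Q$ cannot also be normal; hence the number $n_q$ of Sylow $q$-subgroups is $\equiv 1\pmod q$, divides $p$, and is not $1$, so $n_q=p$. The congruence $p\equiv 1\pmod q$ gives $q\mid p-1$. Consequently $G=PQ$ is a semidirect product $\mathbb{Z}_p\rtimes\mathbb{Z}_q$ in which $\mathbb{Z}_q$ acts on $\mathbb{Z}_p$ through an element of order $q$ in $\operatorname{Aut}(\mathbb{Z}_p)\cong\mathbb{Z}_{p-1}$ (such an element exists precisely because $q\mid p-1$), and conversely every such group has order $pq$ and is non-nilpotent.

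Next I would enumerate the proper non-trivial subgroups of $G$: by Lagrange they have order $p$ or $q$, and a subgroup of prime order is just a cyclic group generated by an element of that order. The order-$p$ subgroups are exactly the Sylow $p$-subgroups, so there is only $P$; the order-$q$ subgroups are exactly the $p$ Sylow $q$-subgroups. Thus the vertex set of $\Gamma(G)$ is $\{P, Q_1,\dots,Q_p\}$ with $p+1$ elements. Two distinct $Q_i$ and $Q_j$ are distinct subgroups of prime order $q$, so $Q_i\cap Q_j=1$; and $P\cap Q_i=1$ since $\gcd(|P|,|Q_i|)=1$. Hence no two vertices are adjacent, i.e. $\Gamma(G)$ consists of $p+1$ isolated vertices, which trivially contains no $K_{3,3}$ (indeed no edges), completing the proof.

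There is essentially no obstacle here: the only point requiring a word of care is the claim that a subgroup of $G$ of order $p$ (resp.\ $q$) is automatically a Sylow subgroup and hence accounted for by the counts above, which is immediate from $|G|=pq$ with $p,q$ distinct primes. Everything else is routine Sylow arithmetic together with the observation that coprime-order subgroups, and distinct subgroups of the same prime order, meet trivially.
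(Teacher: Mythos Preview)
Your proof is correct and follows exactly the line the paper indicates: the paper simply remarks (just before stating the lemma) that the intersection graph of any group of order $pq$ consists of isolated vertices and is therefore $K_{3,3}$-free, and your argument supplies the routine Sylow details behind that remark.
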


\subsection*{Non-solvable groups}
\label{sec:non-solvable-groups}

First, we shall show that there is no finite non-abelian simple group which is $K_{3,3}$-free. To this end, we need the following result.

\begin{thm}[{\cite[Theorem~1]{Deskins1961}}]\label{theorem:deskins}
  If the finite group $G$ contains a maximal subgroup $M$ which is nilpotent of class less than $3$, then $G$ is solvable.
\end{thm}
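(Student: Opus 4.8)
The statement to be proved is Deskins' solvability criterion, a theorem whose complete proof rests on Thompson's normal $p$-complement theory and so cannot be obtained in a few lines; accordingly the plan is to lay out the architecture of such a proof. I would argue by minimal counterexample: suppose the conclusion fails and let $G$ be a non-solvable group of least order admitting a maximal subgroup $M$ that is nilpotent of class at most $2$. First come the standard reductions. If $M\triangleleft G$ then $G/M$ has prime order, so $G$ is solvable because $M$ is; hence $\mathrm{N}_G(M)=M$. For any nontrivial proper normal $N\triangleleft G$, maximality gives $MN\in\{M,G\}$; if $MN=M$ then $N\le M$ is nilpotent and $M/N$ is a nilpotent maximal subgroup of class $\le 2$ in the smaller group $G/N$, so by minimality $G/N$ is solvable and hence $G$ is solvable, a contradiction. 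Thus $MN=G$ for every such $N$, and in particular $G/N\cong M/(M\cap N)$ is nilpotent for every minimal normal $N$. If $G$ had two distinct minimal normal subgroups $N_1,N_2$, then $G$ would embed into the nilpotent group $G/N_1\times G/N_2$, contradicting non-solvability; so $G$ has a \emph{unique} minimal normal subgroup $N$. If $N$ were abelian, then $M\cap N$, being normal in $M$ and in the abelian $N$, would be normal in $G=\langle M,N\rangle$, forcing $M\cap N=1$ and $G=N\rtimes M$ solvable, again a contradiction. Hence $N=S_1\times\cdots\times S_k$ is a direct product of isomorphic non-abelian simple groups, $G=MN$, and (as a non-solvable group) $G$ is not a $p$-group for any prime $p$.

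Next comes the core. I would fix a prime $p$ dividing $|M|$ and let $P$ be the Sylow $p$-subgroup of $M$; since $M$ is nilpotent, $P\triangleleft M$, and $P$ has class $\le 2$. The goal is to show that $G$ possesses a normal $p$-complement $K$: then $K$ is a nontrivial proper normal subgroup of $G$ (proper since $G$ is not a $p$-group), which contradicts the reductions above — and if $G$ happens to be simple, the mere existence of a normal $p$-complement is already the contradiction. The instrument is Thompson's normal $p$-complement criterion. For odd $p$ one is exactly in Thompson's setting: using the maximality and self-normalization of $M$ together with the fact that $P$ has class $\le 2$ — which keeps the relevant characteristic subgroups $\mathrm{Z}(P)$ and the Thompson subgroup $J(P)$ under control — one shows that $\mathrm{N}_G(\mathrm{Z}(P))$ and $\mathrm{N}_G(J(P))$ are contained in $M$, hence inherit normal $p$-complements, and Thompson's theorem then yields a normal $p$-complement in $G$. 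The prime $p=2$ is the delicate case: here $P$ is a $2$-group of class $\le 2$, for which the fusion of $2$-elements is still restricted enough (by Burnside/Gr\"un-type fusion control for class-$\le 2$ Sylow $2$-subgroups, supplemented by direct treatment of a few small configurations) to run the same normal-complement argument.

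I expect the prime $2$ to be the main obstacle, and this is precisely why the hypothesis asks for nilpotency class $<3$ rather than mere nilpotency: the class bound is exactly what lets the classical transfer and fusion machinery force a normal complement, whereas allowing a nilpotent maximal subgroup of arbitrary class pushes one into Thompson's classification of minimal simple groups (ultimately the classification of finite simple groups). Everything outside that step is the routine minimal-counterexample bookkeeping sketched in the first paragraph.
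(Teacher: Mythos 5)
The paper does not prove this statement at all: it is imported verbatim as \cite[Theorem~1]{Deskins1961} and used as a black box in the proof of Proposition~\ref{prop:simple}, so your attempt has to be judged as a reconstruction of Deskins' argument rather than against anything in the text. Your first paragraph --- the minimal-counterexample reductions ($\mathrm{N}_G(M)=M$, $MN=G$ for every nontrivial proper normal $N$, uniqueness and non-abelianness of the minimal normal subgroup) --- is correct and is indeed how such proofs begin. But the core of your second paragraph is not a proof: the clauses ``one shows that $\mathrm{N}_G(\mathrm{Z}(P))$ and $\mathrm{N}_G(J(P))$ are contained in $M$'' and ``the fusion of $2$-elements is still restricted enough \dots\ to run the same normal-complement argument'' are exactly where all the mathematical content of the theorem lives, and you assert them rather than establish them. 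In particular the case $p=2$ --- the only place the hypothesis of class $<3$ is actually used --- is dispatched with ``Burnside/Gr\"un-type fusion control \dots\ supplemented by direct treatment of a few small configurations,'' which is a placeholder, not an argument.

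There is also a concrete logical slip even granting the normal $p$-complement $K$: you say its existence ``contradicts the reductions above,'' but your reductions never showed that $G$ has no nontrivial proper normal subgroup --- only that any such $N$ satisfies $MN=G$ and contains the unique minimal normal subgroup. A normal $p$-complement $K$ with $MK=G$ and $G/K$ a $p$-group is entirely consistent with everything in your first paragraph, so the final contradiction still has to be extracted (for instance by producing normal $p$-complements for enough primes and playing them against the non-abelian minimal normal subgroup). You should also justify that $P\in\mathrm{Syl}_p(G)$ before invoking Thompson's criterion; this does follow, since $P$ is characteristic in $M$, maximality forces $\mathrm{N}_G(P)\in\{M,G\}$, the reduction $MN=G$ rules out $\mathrm{N}_G(P)=G$, and a $p$-subgroup that is Sylow in its own normalizer is Sylow in $G$ --- but it is a step you skipped. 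As it stands the proposal is an accurate road map with the hard middle missing; for the purposes of this paper the correct move is what the author does, namely to cite Deskins.
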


As a consequence of Theorem~\ref{theorem:deskins}, if a Sylow $p$-subgroup $P$ of $G$ is maximal and $|P|=p^3$, then $G$ is solvable. 



\begin{prop}\label{prop:simple}
  If $G$ is a finite minimal simple group, then $\Gamma(G)$ contains a $K_{3,3}$ as a subgraph.
\end{prop}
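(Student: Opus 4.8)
The plan is to use the classification of minimal simple groups due to Thompson: every finite minimal simple group is isomorphic to one of $\mathrm{PSL}(2,2^p)$ ($p$ prime), $\mathrm{PSL}(2,3^p)$ ($p$ an odd prime), $\mathrm{PSL}(2,p)$ ($p>3$ prime with $p\equiv\pm2\pmod 5$), $\mathrm{Sz}(2^p)$ ($p$ an odd prime), or $\mathrm{PSL}(3,3)$. So the task reduces to exhibiting a $K_{3,3}$ in the intersection graph of each family. Rather than treat every family from scratch, I would isolate one uniform mechanism: in each of these groups a Sylow $2$-subgroup $P$ is elementary abelian of rank $\ge 2$ in the $\mathrm{PSL}(2,q)$ cases (it is $\mathbb{Z}_2\times\mathbb{Z}_2$ for $\mathrm{PSL}(2,q)$ with $q$ even or $q\equiv\pm3\pmod 8$, and larger when $q\equiv\pm1\pmod 8$), which by itself is not enough; so instead I would look at a Borel-type subgroup (the normalizer of a Sylow subgroup of the natural module) which is large and contains many small subgroups sharing a common non-trivial subgroup.

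Concretely, the cleanest route: in $\mathrm{PSL}(2,q)$ the point stabilizer $B=\mathbb{Z}_r\rtimes\mathbb{Z}_{(q-1)/\gcd(2,q-1)}$ (for a suitable prime $r\mid q$ or $r\mid q+1$, choosing whichever gives a non-cyclic complement) contains a subgroup of shape $E\rtimes C$ with $E$ the unique minimal normal subgroup of $B$; then every subgroup of $B$ of the form $E\cdot D$ with $D\le C$ contains $E$, and if $C$ (or more precisely the relevant cyclic piece) has a composite order so that there are at least six proper nontrivial such overgroups of $E$, those six subgroups form a $K_6$, which contains $K_{3,3}$. When the complement is too small for this, one instead takes three Sylow $r$-subgroups $R_1,R_2,R_3$ together with three subgroups of order $r\cdot s$ each containing a common $R_i$ — but the genuinely robust version is the $K_6$ argument inside a single large maximal subgroup. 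For $\mathrm{Sz}(2^p)$ one uses the Frobenius group $E\rtimes\mathbb{Z}_{2^p-1}$ of order $2^{2p}(2^p-1)$, whose Sylow $2$-subgroup $E$ has order $2^{2p}\ge 2^6$ and a chain of at least six distinct subgroups above the center $Z(E)$ (of order $2^p$), giving a $K_6$ at once; and for $\mathrm{PSL}(3,3)$ one points to an explicit maximal parabolic of order $432=2^4\cdot 3^3$ whose Sylow $3$-subgroup already has more than six subgroups above its center.

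The key steps in order: (1) invoke Thompson's classification to reduce to the five families; (2) in each family name an explicit large subgroup $B$ together with a normal subgroup $E\triangleleft B$ and verify $B/E$ has order admitting at least five proper nontrivial subgroups (or directly exhibit six subgroups of $B$ pairwise intersecting nontrivially); (3) conclude that these six subgroups of $G$ span a $K_6\supseteq K_{3,3}$ in $\Gamma(G)$; (4) handle the low-order exceptional cases $\mathrm{PSL}(2,5)\cong A_5$, $\mathrm{PSL}(2,7)$, $\mathrm{PSL}(2,8)$, $\mathrm{Sz}(8)$, $\mathrm{PSL}(3,3)$ by direct inspection if the generic counting is tight.

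I expect the main obstacle to be the $\mathrm{PSL}(2,q)$ cases where $q-1$ and $q+1$ are both products of few primes (so every proper subgroup of a Borel is ``small''), for instance $\mathrm{PSL}(2,5)$ where a Borel is only $\mathbb{Z}_5\rtimes\mathbb{Z}_2=D_{10}$, too small for the $K_6$ trick. There one must argue differently: use that $A_5$ has several $A_4$'s and several $D_{10}$'s and $S_3$'s, and find three subgroups on one side and three on the other all pairwise meeting — e.g. the three subgroups containing a fixed involution's centralizer structure, or simply note $A_5$ contains $A_4\cong(\mathbb{Z}_2\times\mathbb{Z}_2)\rtimes\mathbb{Z}_3$ which is already not in the list of Lemma~\ref{lem:abelian}/Lemma~\ref{lem:nilpotent} (it contains $\mathbb{Z}_2\times\mathbb{Z}_2$ with enough overgroups), so $\Gamma(A_4)$, hence $\Gamma(A_5)$, contains $K_{3,3}$ — and this ``find a bad subgroup already classified'' shortcut may in fact dispatch every case once one checks each minimal simple group contains $A_4$, $S_3$ with extra structure, or a group of order $p^2q$ from the eliminated list, which is the subsidiary fact I would verify carefully.
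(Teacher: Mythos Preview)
Your plan has a concrete error that breaks the ``find a bad subgroup'' shortcut you propose for the small cases. You claim that $A_4\cong(\mathbb{Z}_2\times\mathbb{Z}_2)\rtimes\mathbb{Z}_3$ has $K_{3,3}$ in its intersection graph, but this is false: $A_4$ is precisely the group in item~(3) of Lemma~\ref{lem:p2q} with $p=2$, $q=3$ (note $3\mid 2+1$), and it is $K_{3,3}$-free. Indeed $\Gamma(A_4)$ is just a star $K_{1,3}$ (the Klein four-group joined to its three involutions) together with four isolated vertices (the Sylow $3$-subgroups). So the shortcut ``each minimal simple group contains a subgroup already known to be non-$K_{3,3}$-free'' fails already for $A_5$, and you still owe an honest argument there. (One that works: the five copies of $A_4$ and the six copies of $D_{10}$ in $A_5$ pairwise intersect non-trivially by the product formula, giving a $K_{11}$.) More generally your Borel-subgroup scheme is too loose in the small-$q$ range you yourself flag, and the fallback you offer is incorrect; the proposal as written does not close the case analysis.

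By contrast, the paper's proof avoids family-by-family bookkeeping almost entirely. It argues by contradiction: if a minimal simple $G$ were $K_{3,3}$-free, its Sylow $2$-subgroup $S$ would have to appear in the nilpotent list (Lemmas~\ref{lem:abelian} and~\ref{lem:nilpotent}); cyclic $S$ is impossible by the normal $2$-complement theorem, while $S\in\{\mathbb{Z}_4\times\mathbb{Z}_2,D_8,Q_8\}$ would force $S$ maximal (since $\Gamma(S)$ already contains $K_{1,3}$), contradicting simplicity via Deskins' theorem. This pins down $S\cong\mathbb{Z}_2\times\mathbb{Z}_2$, after which Kondrat'ev's description of $N_G(S)$ reduces everything to $\mathrm{PSL}_2(q)$ with $q\equiv\pm3\pmod 8$, handled by a single dihedral-subgroup argument. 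The paper's route trades your explicit constructions for a structural funnel through the Sylow $2$-subgroup; it is shorter and needs no low-order exceptions, at the cost of invoking heavier external results (Deskins, Kondrat'ev).
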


\begin{proof}
  Consider a finite simple group $U$ which is $K_{3,3}$-free. Then there exists a minimal finite simple group $G$ which is isomorphic to a non-abelian composition factor of some subgroup of $U$. Thus, $G$ must be $K_{3,3}$-free.

Minimal simple groups are known (see \cite[Corollary~1]{Thompson1968}). Thus, $G$ is isomorphic to one of the following groups: $PSL_2(q)$, $Sz(q)$, $PSL_3(3)$.

In view of Feit-Thompson Theorem, $2$ divides $|G|$. Let $S$ be a Sylow $2$-subgroup of $G$. Then $S$ is a $2$-group from Lemmas~\ref{lem:abelian} or \ref{lem:nilpotent}. Thus, either $S \cong \mathbb{Z}_{2^i}$, where $1 \leq i\leq 6$, or $S \in \{\mathbb{Z}_2 \times \mathbb{Z}_2, \mathbb{Z}_4 \times \mathbb{Z}_2, D_8, Q_8\}$.

Since the intersection graphs of each of the $2$-groups $\mathbb{Z}_4 \times \mathbb{Z}_2$, $D_8$ and $Q_8$ contains $K_{1,3}$, those groups must be maximal in $G$. By Theorem~\ref{theorem:deskins}, $G$ is solvable. A contradiction.

Suppose that $S$ is cyclic. Then, by Remark~\ref{rem6}, S has a normal complement in $G$ which contradicts with the assumption that $G$ is simple.

Thus, $S \cong \mathbb{Z}_2 \times \mathbb{Z}_2$. By the Burnside normal complement theorem
(see \cite[Theorem~7.50]{Rotman1995}), the normalizer $N_G(S)$ properly contains $S$. Clearly, $N_G(S)$ is a proper subgroup of $G$, as otherwise, $S$ would be a normal subgroup.

Normalizers of Sylow $2$-subgroups of finite simple groups are known (see \cite[Corollary]{Kondratev2005}). Thus, $G$ is isomorphic to either $PSL_2(q)$, where $q \cong \pm 3 \pmod 8$ (in this case $N_G(S) \cong A_4$), or to $PSL_3(3)$. But $PSL_3(3)$ properly contains $S_4$ (see \cite{Atlas1985}), therefore is not $K_{3,3}$-free.  If $G \cong PSL_2(q)$, where $q \cong \pm 3 \pmod 8$, then there is a subgroup $H \cong D_{q\pm 1}$ of $G$ which is a subgroup of odd index (see \cite[Table~8.7]{BrayEtAl2013}). Take $S$ to be a Sylow $2$-subgroup of $H$. Then $H$, $N_G(S)$, $S$ and three proper subgroups of $S$ form a graph which contains $K_{3,3}$. A contradiction.
\end{proof}

\begin{cor}\label{cor:nonsolvable}
 A finite non-solvable group is not $K_{3,3}$-free.
\end{cor}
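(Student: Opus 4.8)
The goal is Corollary~\ref{cor:nonsolvable}: a finite non-solvable group $G$ cannot be $K_{3,3}$-free. The plan is to derive this from Proposition~\ref{prop:simple} together with the hereditary nature of the $K_{3,3}$-free property (subgroups and quotients of $K_{3,3}$-free groups are $K_{3,3}$-free). Suppose, for contradiction, that $G$ is non-solvable and $K_{3,3}$-free. Since $G$ is not solvable, some composition factor of $G$ is a non-abelian simple group $S$; indeed, $S$ arises as a quotient $H/N$ of a subnormal subgroup $H$ of $G$.

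First I would reduce to the simple-group case. Any subgroup of a $K_{3,3}$-free group is $K_{3,3}$-free (this is the trivial observation used throughout the paper), so $H$ is $K_{3,3}$-free. For the quotient step, note that if $K/N$ and $L/N$ are distinct proper nontrivial subgroups of $H/N$ with $(K/N)\cap(L/N)\neq 1$, then $K$ and $L$ are distinct proper subgroups of $H$ (both containing $N$, which is nontrivial) with $K\cap L \supseteq N \neq 1$; hence an edge in $\Gamma(H/N)$ lifts to an edge in $\Gamma(H)$, and any $K_{3,3}$ in $\Gamma(H/N)$ lifts to a $K_{3,3}$ in $\Gamma(H)$. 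Therefore $S = H/N$ is a non-abelian simple group that is $K_{3,3}$-free.

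Next I would invoke the structure of $S$. Every non-abelian simple group has a minimal simple group among the non-abelian composition factors of its subgroups — in fact, any non-abelian simple group contains (as a composition factor of a subgroup, e.g.\ of a minimal-by-non-solvable subgroup) a minimal simple group $G_0$. By the same subgroup/quotient argument as above, $G_0$ is $K_{3,3}$-free. But Proposition~\ref{prop:simple} says $\Gamma(G_0)$ contains a $K_{3,3}$ — contradiction. Actually the cleanest route is to apply the subgroup/quotient reduction directly to $G$: from the non-solvable $K_{3,3}$-free $G$ we extract a $K_{3,3}$-free minimal simple group, and Proposition~\ref{prop:simple} immediately gives the contradiction. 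This is essentially the argument already sketched at the start of the proof of Proposition~\ref{prop:simple}.

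The only real subtlety — and the one step worth stating carefully rather than waving at — is the quotient compatibility: that $K_{3,3}$-freeness passes to quotients by a nontrivial normal subgroup. Subgroup-heredity is immediate, but quotient-heredity needs the observation that the preimages of distinct subgroups of $G/N$ are distinct subgroups of $G$ all containing the nontrivial subgroup $N$, so they are pairwise adjacent in $\Gamma(G)$ whenever... in fact they are automatically pairwise adjacent since they all contain $N\neq 1$; this only makes the argument easier. Combining subgroup- and quotient-heredity lets one pass from $G$ to a non-abelian simple composition factor of a subgroup, then to a minimal simple one, where Proposition~\ref{prop:simple} applies. I do not expect any genuine obstacle here; the corollary is a short deduction, and the substantive work was done in Proposition~\ref{prop:simple}.

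\begin{proof}
Suppose for a contradiction that $G$ is a finite non-solvable group which is $K_{3,3}$-free. Being $K_{3,3}$-free is inherited by subgroups, and it is also inherited by quotients: if $N\triangleleft H$ is non-trivial, then distinct proper non-trivial subgroups of $H/N$ pull back to distinct proper subgroups of $H$, all of which contain $N$ and hence pairwise intersect non-trivially; thus any $K_{3,3}$ in $\Gamma(H/N)$ would already appear in $\Gamma(H)$. Since $G$ is not solvable, it has a non-abelian simple composition factor, that is, a quotient $H/N$ of some subnormal subgroup $H\leq G$ with $H/N$ non-abelian simple; by the two inheritance properties, $H/N$ is $K_{3,3}$-free. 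Finally, any non-abelian simple group has a minimal simple group among the non-abelian composition factors of its subgroups, so by the same reasoning there is a finite minimal simple group $G_0$ which is $K_{3,3}$-free. This contradicts Proposition~\ref{prop:simple}.
\end{proof}
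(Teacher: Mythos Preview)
Your proposal is correct and takes essentially the same approach as the paper: reduce from a non-solvable $K_{3,3}$-free group to a $K_{3,3}$-free minimal simple group via subgroup- and quotient-heredity, then invoke Proposition~\ref{prop:simple}. You spell out the quotient-heredity step and the passage to a minimal simple group more explicitly than the paper does (the paper tucks the latter into the first two sentences of the proof of Proposition~\ref{prop:simple} and leaves the former implicit), but the substance is the same.
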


\begin{proof} 
Let $G$ be a finite non-solvable group. Since $G$ has a non-abelian simple composition factor which is not $K_{3,3}$-free by Proposition~\ref{prop:simple}, $G$ is not $K_{3,3}$-free as well.
\end{proof}

Our main result follows from Lemmas~\ref{lem:abelian},\,\ref{lem:nilpotent},\,\ref{lem:p3q},\,\ref{lem:p2q},\,\ref{lem:p2q2},\,\ref{lem:pqr},\,\ref{lem:pq} and Corollary~\ref{cor:nonsolvable}.


\end{document}